\newtheorem{thm}{Theorem}[section]
\theoremstyle{definition}
\newtheorem{cor}[thm]{Corollary}
\newtheorem{prop}[thm]{Proposition}
\newtheorem{defn}[thm]{Definition}
\newtheorem{fact}[thm]{Fact}
\newtheorem*{remark}{Remark}
\numberwithin{equation}{section}
\newcommand{\N}{\mathbb{N}}
\newcommand{\Z}{\mathbb{Z}}
\newcommand{\Q}{\mathbb{Q}}
\newcommand{\R}{\mathbb{R}}
\newcommand{\C}{\mathbb{C}}
\newcommand{\dcl}{\operatorname{dcl}}
\newcommand{\rk}{\operatorname{rk}}
\newcommand{\fin}[1]{\textrm{Fin}(#1)}
\newcommand{\Un}[1]{\textrm{Un}(#1)}
\newcommand{\Cal}{\mathcal}
\newcommand{\rtil}{\tilde{\mathbb{R}}}
\def \<{\langle}
\def \>{\rangle}
\def \((  {(\!(}
\def \)) {)\!)}
\begin{document}
\bibliographystyle{plain}
\title{Dependent Pairs}

\author{Ayhan G\"unayd\i n, Philipp Hieronymi}
\thanks{The first author was supported by Marie Curie contract MRTN-2004-5512234. The second author was funded by \emph{Deutscher Akademischer Austausch Dienst Doktorandenstipendium} and by the \emph{Engineering and Physical Sciences Research Council}.}%

\address{Centro de Matem\'{a}tica e Aplica\c{c}\~{o}es Fundamentais,
Av. Prof. Gama Pinto, 2,
1649-003 Lisboa, Portugal}

\address{University of Illinois at Urbana-Champaign, Department of Mathematics, 1409 W. Green Street, Urbana, IL 61801, USA }

\email{ayhan@ptmat.fc.ul.pt}

\email{P@hieronymi.de}

\date{\today}
\subjclass[2000]{}


\begin{abstract}
We prove that certain pairs of ordered structures are dependent. Among these structures are dense and tame
pairs of o-minimal structures and further the real field with a multiplicative subgroup with the Mann property, regardless of whether it is dense or discrete.
\end{abstract}

\maketitle

\section{Introduction}
The {\it independence property} was first introduced by Shelah in
\cite{Shelah}. The definition we give below is not the original definition in that paper; for the equivalence of these two definitions and the basics of this property see \cite{poizat}.  As a matter of fact, we define the absence of the independence property for a theory, and we call such a theory {\it dependent} (in some sources it appears as {\it `the theory has NIP'}).

%

Here we construct examples of dependent theories
appearing in a natural way. We show that the theories of dense pairs
of o-minimal expansions of ordered abelian groups and tame pairs of
o-minimal expansions of real closed fields are dependent (see Sections \ref{densepairs} and \ref{tamepairs}). Actually our
techniques apply to more general pairs. For instance, in Subsection \ref{mann} we prove that the theory of a
real closed field $R$ expanded by a dense multiplicative subgroup of $R^{>0}$ with the
Mann property is dependent whenever the group has the property that
for every prime $p$ the subgroup of $p^{\operatorname{th}}$ powers
has finite index.
Finally, in the last section we show that a real closed field expanded by a cyclic multiplicative subgroup generated by a positive element is dependent.

\medskip\noindent
Let $x_1,\dots,x_p,y_1,\dots,y_q$ be distinct variables and put $\vec x=(x_1,\dots,x_p)$ and $\vec y=(y_1,\dots,y_q)$. Here is a precise definition of the property under consideration.

\begin{defn}
Let $T$ be a complete theory in a language $\Cal L$ and let $\mathbb M$ be a monster model of it.

\smallskip\noindent
 (1) We say that an $\Cal L$-formula $\varphi(\vec x,\vec y)$ is {\it dependent} (in $T$) if for every indiscernible sequence
$(\vec a_i)_{i\in \omega}$ from $\mathbb M^p$ and every $\vec{b}\in\mathbb M^q$, there is $i_0\in\omega$ such that either $\mathbb M\models \varphi(\vec a_i,\vec{b})$ for every $i>i_0$  or $\mathbb M\models \neg\varphi(\vec a_i,\vec{b})$ for every $i>i_0$.

\smallskip\noindent
(2)  The theory $T$ is {\it dependent} if every $\Cal L$-formula is dependent in $T$.
\end{defn}

\noindent A key fact from \cite{Shelah} is that a theory is dependent, if all
formulas of the form $\varphi(x,\vec{y})$ are dependent.

\medskip\noindent
Let $\Cal A=(A,<,\dots)$ be an o-minimal structure in a language $\Cal L$ and $B$ is a subset of $A$.  We consider the structure $(\Cal A,B)$ in the language $\Cal L(U):=\Cal L\cup\{U\}$, where $U$ is a unary relation symbol not in $\Cal L$; and $T_B$ denotes the $\Cal L(U)$-theory of $(\Cal A,B)$.
Vaguely speaking, we have two cases according to $B$ being dense or discrete in its convex hull in $A$. We handle these cases separately; see Theorems \ref{nipisoc} and \ref{thmdiscpair} for the general results.

\medskip\noindent
We have learned that Berenstein, Dolich and Onshuus have
been working on similar topics. However, after communicating with
Berenstein, we have decided that there are enough differences between the two projects.

\medskip\noindent
{\it Acknowledgements.} We thank Martin Bays and Oleg Belegradek for helpful comments on earlier versions of this paper.

\medskip\noindent
{\it Notations, conventions.} Throughout $m,n,p,q,r$ range over $\N:=\{0,1,2,\dots\}$, the set of natural numbers, which we distinguish from the first infinite ordinal $\omega$. We usually let $i,j,k$ denote elements of $\omega$.  Also {\it `definable'} means `definable with parameters'; in the case that we want to make parameters explicit, we write $\Cal L$-$B$-definable where $B$ is a subset of the appropriate structure.

\smallskip\noindent
We name model theoretic structures with capital letters in Calligraphic font, and the underlying set of these structures with the same capital letter in the normal font, with the exception that both a monster model of a theory and its underlying set are denoted by a capital letter in blackboard bold font, although we reserve $\N,\Z,\Q,\R,\C$ for their standard use. For instance $\Cal R=(R,\dots)$ will denote an arbitrary model theoretic structure with $R$ as the underlying set, whereas $\mathbb M$ will denote a monster model of a theory and its underlying set.

\smallskip\noindent
We use letters $x,y,z$ for variables and letters $a,b,c$ for elements from the underlying set of a structure. We distinguish tuples of variables from a single variable by using vector notation, likewise for tuples of elements. For example, $x$ is a single variable and $\vec x$ is a tuple of variables.

\section{Fabricating better indiscernible sequences}

We accumulate the technical details in this section. First we give some combinatorial results, which will be useful in the rest of the paper. They are folklore, but we include proofs for completeness. Then we prove a general reduction step for proving pair-like structures are dependent.

\medskip\noindent
Let $T'$ be a theory in the first order language $\Cal L'$ with a monster model $\mathbb M$. As in the Introduction, let $x_1,\dots,x_p,y_1,\dots,y_q$ be distinct variables and put $\vec x=(x_1,\dots,x_p)$ and $\vec y=(y_1,\dots,y_q)$.

\begin{prop}\label{indiscernible1}
 Let  $(\vec a_i)_{i\in\omega}$ be an indiscernible sequence from $\mathbb M^p$. Suppose that $\phi(\vec x,\vec y)$ is an $\Cal L'$-formula such that $\mathbb M\models \exists\vec y\,\phi(\vec a_i,\vec y)$ for some $i\in\omega$. Then there is an indiscernible sequence $(\vec b_i)_{i\in\omega}$ from $\mathbb M^q$ such that $\mathbb M\models \phi(\vec a_i,\vec b_i)$ for every $i\in\omega$.
\end{prop}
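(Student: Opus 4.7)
The strategy is a standard three-step manoeuvre: first witness the existential, then extract an indiscernible sequence by a Ramsey-type argument, and finally transport back along an automorphism of $\mathbb M$.

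\emph{Step 1 (Witnessing).} Since $(\vec a_i)_{i\in\omega}$ is indiscernible, all the tuples $\vec a_i$ share the same complete type over $\emptyset$. Hence the assumption that $\mathbb M\models\exists\vec y\,\phi(\vec a_i,\vec y)$ for some $i$ upgrades to the same statement for every $i\in\omega$. For each $i$ I pick a witness $\vec c_i\in\mathbb M^q$ with $\mathbb M\models\phi(\vec a_i,\vec c_i)$.

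\emph{Step 2 (Indiscernible extraction).} I now apply the standard Erd\H{o}s--Rado plus compactness extraction to the sequence $(\vec a_i,\vec c_i)_{i\in\omega}$ in $\mathbb M^{p+q}$: this yields an indiscernible sequence $(\vec a'_i,\vec b'_i)_{i\in\omega}$ in $\mathbb M^{p+q}$ whose Ehrenfeucht--Mostowski type is realised, on every finite increasing tuple, by some finite increasing subtuple of $(\vec a_i,\vec c_i)$. In particular $\mathbb M\models\phi(\vec a'_i,\vec b'_i)$ for each $i$, and the projection $(\vec a'_i)_{i\in\omega}$ is an indiscernible sequence whose EM-type is contained in the EM-type of $(\vec a_i)$. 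Since $(\vec a_i)$ is itself indiscernible, its EM-type is exactly its set of realised formulas, so $(\vec a'_i)$ and $(\vec a_i)$ have the same EM-type, and by indiscernibility of both sequences they have the same complete type as $\omega$-sequences over $\emptyset$.

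\emph{Step 3 (Transport).} Saturation of $\mathbb M$ then gives an automorphism $\sigma$ of $\mathbb M$ with $\sigma(\vec a'_i)=\vec a_i$ for every $i\in\omega$. Setting $\vec b_i:=\sigma(\vec b'_i)$ produces an indiscernible sequence with $\mathbb M\models\phi(\vec a_i,\vec b_i)$ for every $i$, as required (in fact yielding joint indiscernibility of $(\vec a_i,\vec b_i)_{i\in\omega}$, which is what later arguments will want).

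The only non-trivial ingredient is the Erd\H{o}s--Rado extraction used in Step~2, but the paper has flagged the combinatorial preliminaries as folklore, so I would cite the corresponding general lemma from this section rather than redoing the argument. The rest is bookkeeping: indiscernibility buys us transitivity of types in Step 1 and uniqueness of type from EM-type in Step 3, and saturation of $\mathbb M$ gives the automorphism.
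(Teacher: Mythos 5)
Your proof is correct, but it takes a genuinely different route from the paper's. The paper constructs directly the type $\Sigma(\vec z)$ of the sought sequence $(\vec b_i)$ -- a type asserting both $\phi(\vec a_i,\vec z_i)$ and the indiscernibility schema for $(\vec z_i)$ -- and proves it finitely satisfiable: given a finite fragment, Ramsey produces a subset $S\subseteq\omega$ on which the witnesses $\vec c_i$ behave uniformly with respect to the finitely many relevant formulas, and indiscernibility of $(\vec a_i)$ is then used to pull a realisation of that fragment back to the initial segment of indices. You instead run the standard extraction lemma on the joint sequence $(\vec a_i,\vec c_i)$ to get an indiscernible sequence $(\vec a'_i,\vec b'_i)$ whose EM-type is based on the original, observe that the projection $(\vec a'_i)$ must then have the same (complete) EM-type and hence the same sequence-type as $(\vec a_i)$, and transport via an automorphism of the monster. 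Both routes ultimately rest on Ramsey plus saturation, and both are sound. Your version buys a slightly stronger output for free (joint indiscernibility of $(\vec a_i,\vec b_i)$), and is arguably more modular, but it leans on the extraction lemma as a black box (which the paper cannot cite from this section, since the point of the section is to prove such facts from scratch) and on strong homogeneity of $\mathbb M$ -- note the automorphism in Step~3 comes from homogeneity, not saturation as you wrote, a terminological slip rather than a gap since a monster model has both. The paper's approach stays self-contained and avoids any appeal to homogeneity, at the cost of being more hands-on.
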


\begin{proof}
First note that since $(\vec a_i)_{i\in\omega}$ is indiscernible, we can take $\vec c_i\in\mathbb M^q$ for every $i\in\omega$ such that $\mathbb M\models \phi(\vec a_i,\vec c_i)$.

\medskip\noindent
Let $\vec z=(\vec z_i)_{i\in\omega}$ be a countable tuple of distinct tuples of variables of length $q$ and let $\Sigma(\vec z)$ be the collection of $\Cal L'$-formulas of the form:
\begin{equation}\label{1}
 \phi(\vec a_i,\vec z_i)\text{ or }
\end{equation}
\begin{equation}\label{2}
 \psi(\vec z_{i_1},\dots,\vec z_{i_m})\leftrightarrow\psi(\vec z_{j_1},\dots,\vec z_{j_m}),
\end{equation}
where $\psi$ is an $\Cal L'$-formula and $i_1<\dots<i_m<\omega$ and $j_1<\dots<j_m<\omega$. We need to show that $\Sigma(\vec z)$ is consistent. By saturation it suffices to show that every finite subset $\Sigma'$ of $\Sigma(\vec z)$ is realized in $\mathbb M$.

\medskip\noindent
Take $k\in\omega$ and a finite set $\Delta$ of  $\Cal L'$-formulas such that if
$$\psi(\vec z_{i_1},\dots,\vec z_{i_m})\leftrightarrow\psi(\vec z_{j_1},\dots,\vec z_{j_m})$$
is in $\Sigma'$ then $i_1<\dots<i_m<k$, $j_1<\dots<j_m<k$ and $\psi\in\Delta$.

\medskip\noindent
For $i_1<\dots<i_m$ put
$$X(i_1,\dots,i_m):=\{\psi\in\Delta:\mathbb M\models\psi(\vec c_{i_1},\dots,\vec c_{i_m})\}.$$
By Ramsey's Theorem there is an infinite subset $S$ of $\omega$ such that for every $i_1<\dots<i_m$ and $j_1<\dots<j_m$ from $S$ we have
$$X(i_1,\dots,i_m)=X(j_1,\dots,j_m).$$
Take a subset $I$ of $S$ of cardinality $k$ and let $\theta\big((\vec a_i)_{i\in I},(\vec z_i)_{i\in I}\big)$ be the conjunction of formulas
$$\phi(\vec a_i,\vec z_i)\wedge\psi(\vec z_{i_1},\dots,\vec z_{i_m})\leftrightarrow\psi(\vec z_{j_1},\dots,\vec z_{j_m}),$$
with $i\in I$, $i_1<\dots<i_m$ and $j_1<\dots<j_m$ are varying in $I$ and $\psi$ varying in $\Delta$. Since $\mathbb M\models\theta\big((\vec a_i)_{i\in I},(\vec c_i)_{i\in I}\big)$ we have
$$\mathbb M\models\exists \vec y_1\cdots\exists \vec y_k\, \theta\big((\vec a_i)_{i\in I},(\vec y_1,\dots,\vec y_k)\big),$$
where $\vec y_1,\dots,\vec y_k$ are tuples of distinct variables of length $q$. Then by the indiscerniblity of $(\vec a_i)_{i\in\omega}$, it follows that $\Sigma'$ is consistent.
Therefore so is $\Sigma$.
\end{proof}

\begin{prop}\label{shelahprop} Let $(\vec a_i)_{i \in \omega}$ be an indiscernible sequence from $\mathbb M^p$. Also let $\vec{b} \in \mathbb M^q$ and $\psi(\vec{x},\vec{y})$ be an $\mathcal L'$-formula such that both $\{ i \in \omega : \mathbb M \models \psi(\vec a_i,\vec{b})\}$ and $\{ i \in \omega : \mathbb M \models \neg\psi(\vec a_i,\vec{b})\}$ are infinite. Then there is a sequence $(\vec c_i)_{i\in \omega}$ such that
\begin{itemize}
\item $(\vec c_{2i})_{i\in \omega}$ is indiscernible over $\vec{b}$,
\item $\mathbb{M} \models \psi(\vec c_i)$ if and only if $i$ is even, and
\item $\mathbb{M}\models \varphi(\vec c_1,\dots,\vec c_k)$
 if and only if $\mathbb{M}\models \varphi(\vec a_{1},\dots,\vec a_{k})$ for every $\Cal L'$-formula $\varphi$.
\end{itemize}
\end{prop}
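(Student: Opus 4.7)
The plan is to realize $(\vec c_i)_{i\in\omega}$ as a solution in $\mathbb{M}$ of a suitable partial type $\Sigma(\vec z_0,\vec z_1,\ldots)$, and to verify consistency by finite satisfiability via Ramsey's theorem, in the same spirit as the proof of Proposition \ref{indiscernible1}. The three bullets translate into three natural families of formulas in $\Sigma$: (a) $\psi(\vec z_i,\vec b)$ for even $i$ and $\neg\psi(\vec z_i,\vec b)$ for odd $i$; (b) $\varphi(\vec z_{2i_1},\ldots,\vec z_{2i_k},\vec b)\leftrightarrow\varphi(\vec z_{2j_1},\ldots,\vec z_{2j_k},\vec b)$ for every $\mathcal L'$-formula $\varphi$ and all increasing index tuples, enforcing indiscernibility of $(\vec z_{2i})_i$ over $\vec b$; and (c) $\varphi(\vec z_{i_1},\ldots,\vec z_{i_k})$ or its negation, according to whether $\mathbb{M}\models\varphi(\vec a_1,\ldots,\vec a_k)$, forcing the whole sequence to share the $\emptyset$-EM-type of $(\vec a_i)$ (which is stronger than the third bullet but comes out for free).

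Before constructing $\Sigma$, I preprocess by thinning $(\vec a_i)$. Using the hypothesis, pick $k_0<k_1<\cdots$ so that $\mathbb{M}\models\psi(\vec a_{k_n},\vec b)$ iff $n$ is even, and set $\vec a'_n:=\vec a_{k_n}$. The sequence $(\vec a'_n)_{n\in\omega}$ is still $\emptyset$-indiscernible (subsequence of an indiscernible sequence), alternates on $\psi(\cdot,\vec b)$, and has the same $\emptyset$-EM-type as $(\vec a_i)$.

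To show finite satisfiability, let $\Sigma'\subseteq\Sigma$ be finite, involving indices up to some $N$ and formulas from a finite set $\Delta$. I apply Ramsey's theorem to $(\vec a'_{2n})_{n\in\omega}$, iterated over the arities occurring in $\Delta$, coloring each increasing tuple by its $\Delta$-type over $\vec b$, to extract an infinite $S\subseteq\omega$ on which the subsequence is $\Delta$-indiscernible over $\vec b$. Picking $n_0<\cdots<n_M\in S$ with $2M\geq N$, I set $\vec d_{2m}:=\vec a'_{2n_m}$ and $\vec d_{2m+1}:=\vec a'_{2n_m+1}$. This realizes $\Sigma'$: condition (a) holds by construction; the restriction of (b) to $\Delta$ follows from the choice of $S$; and (c) holds because the listed $\vec d_i$'s form a strictly increasing subtuple of $(\vec a'_n)$, so they inherit the $\emptyset$-EM-type of $(\vec a'_n)$, which equals that of $(\vec a_n)$.

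The main obstacle is juggling the three requirements at once. Alternation on $\psi(\cdot,\vec b)$ forces the odd-indexed $\vec c_i$ to be non-arbitrary; Ramsey is needed to upgrade the even subsequence to indiscernibility over $\vec b$; and the global EM-type constraint requires the chosen tuple to remain an increasing subsequence of the original indiscernible sequence. The key trick is to pair each Ramsey-selected even element $\vec a'_{2n_m}$ with its immediate successor $\vec a'_{2n_m+1}$, so that the interleaved tuple is simultaneously strictly increasing in the original indexing and automatically witnesses $\neg\psi(\cdot,\vec b)$ on odd positions.
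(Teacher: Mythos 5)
Your proposal is correct and follows essentially the same route as the paper: normalize the sequence so that $\psi$ alternates, then realize the conjunction of the full type of $(\vec a_i)$, the alternation constraints, and the $\vec b$-indiscernibility scheme on even positions, proving finite satisfiability by a Ramsey extraction from the even subsequence paired with immediate odd successors. The interleaving trick at the end is exactly the paper's choice of $S' = \{2i:i\in S\}\cup\{2i+1:i\in S\}$.
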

\begin{proof} We may assume that $\mathbb M \models \psi(\vec a_i,\vec{b})$ if and only if $i$ is even. Let $\vec z=(\vec z_i)_{i\in\omega}$ be a countable tuple of distinct tuples of variables of length $p$ and let $P=P(\vec z)$ be the type of $(\vec{a_i})_{i\in \omega}$. Let $\Delta$ be the following set of $\mathcal L'$-formulas
$$
\{ \psi(\vec z_{2i},\vec{b}),\neg \psi(\vec z_{2i+1},\vec{b}) : i \in \omega\}.
$$
Further, for an $\mathcal L'$-formula $\varphi(\vec u_1,\dots,\vec u_n,\vec{y})$ we denote by $\Sigma^{\varphi}$ the set of all $\mathcal L'$-formulas of the form
$$
\varphi(\vec z_{2i_1},\dots,\vec z_{2i_n},\vec{b}) \leftrightarrow \varphi(\vec z_{2j_1},\dots,\vec z_{2j_n},\vec{b}),
$$
where $i_1<\dots<i_n$ and $j_1<\dots<j_n$.\\
\noindent It is just left show that $$P \cup \Delta \cup \bigcup \{ \Sigma^{\varphi} : \varphi \textrm{ is an $\mathcal L'$-formula}\}$$ is finitely realizable in $\mathbb M$. Let $\varphi_1,\dots,\varphi_m$ be $\mathcal L'$-formulas. By Ramsey's theorem, there is an infinite subset $S$ of $\omega$ such that for every $j=1,\dots,m$ and $s_1<\dots<s_n$ and $t_1<\dots<t_n$ from $S$ we have
$$\varphi_j(\vec a_{2s_1},\dots,\vec a_{2s_n},\vec{b})\leftrightarrow\varphi_j(\vec a_{2t_1},\dots,\vec a_{2t_n},\vec{b}).$$
Put $S':=\{2i:i\in S\} \cup\{2i+1:i\in S\}$. Then $(a_{i})_{S'}$ realizes
$$P \cup \Delta \cup \bigcup \{ \Sigma^{\varphi_j} : j=1,\dots,m\}.$$
\end{proof}

\medskip\noindent
In the rest of this section let $\Cal L$ be a first order language and $\Cal L(U):=\Cal L\cup\{U\}$ where $U$ is a unary predicate not in $\Cal L$.  Let $T_U$ be an $\Cal L(U)$-theory with a monster model $\mathbb M$ and let $\dcl_{\Cal L}$ denote the definable closure in the $\Cal L$-reduct. In this setting we have the following as a consequence of Proposition \ref{indiscernible1} by taking $\Cal L'=\Cal L(U)$ and $\phi$ to be the appropriate $\Cal L(U)$-formula witnessing that $a_i\in\dcl_{\Cal L}(U(\mathbb M)\cup\{a_1,\dots,a_{i_0}\})$.

 \begin{prop}\label{indlemma}
Let $(a_i)_{i\in\omega}$ be an indiscernible sequence such that there are $i_0<i\in\omega$ with $a_i\in\dcl_\Cal L\big(U(\mathbb
M)\cup\{a_1,\dots,a_{i_0}\}\big)$. Then there exist a
function $f:\mathbb M^m\to\mathbb M$ definable in $\mathbb M$ over
$a_1,\dots,a_{i_0}$ and an indiscernible sequence $(\vec g_i)_{i>i_0}$
such that for every $i>i_0$ we have $\vec g_i \in U(\mathbb
M)^m$ and $f(\vec g_i)=a_i$.
\end{prop}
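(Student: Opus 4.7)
The plan is to apply Proposition~\ref{indiscernible1} to the tail $(a_i)_{i>i_0}$, treated as an indiscernible sequence over the parameters $a_1,\dots,a_{i_0}$, with a single $\Cal L(U)$-formula that witnesses the definable-closure hypothesis uniformly along the tail.

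First I would observe that $(a_i)_{i>i_0}$ is in fact indiscernible over $a_1,\dots,a_{i_0}$: for any $i_0<j_1<\dots<j_k$ and $i_0<\ell_1<\dots<\ell_k$, indiscernibility of the full sequence forces $(a_1,\dots,a_{i_0},a_{j_1},\dots,a_{j_k})$ and $(a_1,\dots,a_{i_0},a_{\ell_1},\dots,a_{\ell_k})$ to share the same $\Cal L(U)$-type. Next I unpack the hypothesis: there is an $\Cal L$-formula $\theta(x,\vec y,\vec u)$ with $|\vec y|=m$ and $|\vec u|=i_0$, together with some $\vec g\in U(\mathbb M)^m$, such that for the given $i>i_0$, the element $a_i$ is the unique solution of $\theta(x,\vec g,a_1,\dots,a_{i_0})$. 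Set
\[
\phi(x,\vec y)\ :=\ U(\vec y)\wedge \theta(x,\vec y,a_1,\dots,a_{i_0})\wedge \forall z\,\bigl(\theta(z,\vec y,a_1,\dots,a_{i_0})\to z=x\bigr),
\]
an $\Cal L(U)$-formula over $a_1,\dots,a_{i_0}$. The indiscernibility of the tail over these parameters then gives $\mathbb M\models \exists \vec y\,\phi(a_j,\vec y)$ for every $j>i_0$.

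Now I apply Proposition~\ref{indiscernible1} in the language $\Cal L(U)$ expanded by constants naming $a_1,\dots,a_{i_0}$, with $\phi$ as the formula and $(a_j)_{j>i_0}$ as the indiscernible sequence. The conclusion is an indiscernible sequence $(\vec g_j)_{j>i_0}$ in $\mathbb M^m$ satisfying $\mathbb M\models \phi(a_j,\vec g_j)$ for every $j>i_0$; in particular each $\vec g_j$ lies in $U(\mathbb M)^m$ and $a_j$ is the unique $x$ with $\theta(x,\vec g_j,a_1,\dots,a_{i_0})$. To finish, define $f:\mathbb M^m\to \mathbb M$ by letting $f(\vec y)$ be this unique solution when one exists and $f(\vec y):=a_1$ otherwise; this yields a function definable in $\mathbb M$ over $a_1,\dots,a_{i_0}$ with $f(\vec g_j)=a_j$ for every $j>i_0$.

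The only real subtlety is that the witnessing formula $\phi$ uses the parameters $a_1,\dots,a_{i_0}$, whereas Proposition~\ref{indiscernible1} is stated in a fixed language without extra parameters. This is handled cleanly by passing to $\Cal L(U)$ with constants for those parameters and invoking indiscernibility of the tail over them, which is itself a direct consequence of the indiscernibility of the full sequence.
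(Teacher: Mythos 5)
Your proposal is correct and follows the same route the paper indicates: apply Proposition~\ref{indiscernible1} to the tail $(a_j)_{j>i_0}$ with an $\Cal L(U)$-formula that uniformly witnesses $a_j\in\dcl_{\Cal L}(U(\mathbb M)\cup\{a_1,\dots,a_{i_0}\})$, then read off $f$ from the underlying $\Cal L$-formula. The only thing you add beyond the paper's one-line sketch is the explicit device of naming $a_1,\dots,a_{i_0}$ by constants so the witnessing formula becomes parameter-free, which is exactly the bookkeeping the paper leaves implicit.
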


\medskip\noindent
For the next result suppose that $\dcl_{\Cal L}$ is a pregeometry. This way we get a notion of {\it rank of $M$ over $N$}, for subsets $M,N$ of $\mathbb M$.  More precisely
$$\rk(M|N):=\inf\big\{|X|:X\subseteq M\text{ and} \dcl_{\Cal L}(X\cup N)=\dcl_{\Cal L}(M\cup N)\big\}.$$
Note that $\rk(M|N)$ can be infinite.
\begin{prop}\label{mainthm} Suppose that
\begin{enumerate}
 \item for every formula $\varphi(\vec x,\vec y)$, indiscernible sequence $(\vec g_i)_{i\in\omega}$ from $U(\mathbb M)^p$ and $\vec b\in\mathbb M^q$, the set
$\{ i\in\omega \ : \ \mathbb{M} \models
  \varphi(\vec g_i,\vec{b})\}$
is either finite or co-finite (in $\omega$),
\item for every formula $\varphi(x,\vec y)$, indiscernible sequence $(a_i)_{i\in\omega}$ from $\mathbb M$ and $\vec b\in \mathbb M^q$ with $a_i\notin\dcl_{\Cal L}(U(\mathbb M),\vec b)$ for every $i\in\omega$, the set
$\{ i\in\omega \ : \ \mathbb{M} \models
  \varphi({a_i},\vec{b})\}$
is either finite or co-finite (in $\omega$).
\end{enumerate}
Then $T_B$ is dependent.
\end{prop}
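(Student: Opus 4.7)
The plan is to apply the reduction to formulas $\varphi(x,\vec y)$ with a single free variable $x$ (the key fact from \cite{Shelah} recalled in the introduction), and then derive a contradiction from any putative failure using the two combinatorial hypotheses, bridged by Proposition \ref{indlemma} on one side and by the pregeometry $\dcl_{\Cal L}$ on the other. Concretely, I would suppose some $\Cal L(U)$-formula $\varphi(x,\vec y)$ fails to be dependent, producing an indiscernible sequence $(a_i)_{i\in\omega}$ from $\mathbb M$ and a tuple $\vec b\in\mathbb M^q$ such that both $\{i\in\omega:\mathbb M\models\varphi(a_i,\vec b)\}$ and its complement in $\omega$ are infinite, and split the argument into two cases according to where $(a_i)$ sits relative to the pregeometry.

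In the first case there are $i_0<i$ with $a_i\in\dcl_{\Cal L}\bigl(U(\mathbb M)\cup\{a_1,\dots,a_{i_0}\}\bigr)$, and I would feed this into Proposition \ref{indlemma} to obtain an $\Cal L$-definable function $f$ over the parameters $a_1,\dots,a_{i_0}$ together with an indiscernible sequence $(\vec g_i)_{i>i_0}$ from $U(\mathbb M)^m$ satisfying $f(\vec g_i)=a_i$. Writing the graph of $f$ as an $\Cal L$-formula $F(\vec x,\vec z,u)$ and setting $\psi(\vec x,\vec y,\vec z):=\exists u\bigl(F(\vec x,\vec z,u)\wedge\varphi(u,\vec y)\bigr)$ yields $\mathbb M\models\psi(\vec g_i,\vec b,a_1,\dots,a_{i_0})\iff\mathbb M\models\varphi(a_i,\vec b)$ for every $i>i_0$. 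Hypothesis~(1) applied to $\psi$, the indiscernible sequence $(\vec g_i)$ from $U(\mathbb M)^m$, and the parameter tuple $(\vec b,a_1,\dots,a_{i_0})$ then forces this set to be finite or cofinite in $\{i>i_0\}$; but it is only a tail of the alleged counterexample, so neither it nor its complement can in fact be finite, a contradiction.

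In the remaining case $a_i\notin\dcl_{\Cal L}\bigl(U(\mathbb M)\cup\{a_1,\dots,a_{i_0}\}\bigr)$ for every $i_0<i$, and by the exchange property of the pregeometry this makes $(a_i)_{i\in\omega}$ a $\dcl_{\Cal L}$-independent sequence over $U(\mathbb M)$. Since $\dcl_{\Cal L}\bigl(U(\mathbb M)\cup\vec b\bigr)$ has rank at most $q$ over $U(\mathbb M)$, at most $q$ of the $a_i$ can belong to this set; I would discard these finitely many exceptions and apply hypothesis~(2) to the resulting infinite sub-sequence $(a_{j_i})_{i\in\omega}$, which remains indiscernible in $\Cal L(U)$, has every term outside $\dcl_{\Cal L}(U(\mathbb M),\vec b)$, and along which both $\{i:\mathbb M\models\varphi(a_{j_i},\vec b)\}$ and its complement are still infinite. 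This contradicts the finite-or-cofinite conclusion of hypothesis~(2).

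The hard part, as I see it, is Case~2: translating the purely local non-containment statements ``$a_i\notin\dcl_{\Cal L}(U(\mathbb M)\cup\{a_1,\dots,a_{i_0}\})$ for all $i_0<i$'' into the global geometric statement that $(a_i)_{i\in\omega}$ is $\dcl_{\Cal L}$-independent over $U(\mathbb M)$, and then exploiting the finiteness of $\vec b$ together with the pregeometry hypothesis to trap only finitely many $a_i$ inside $\dcl_{\Cal L}(U(\mathbb M),\vec b)$. Once these pregeometric moves are in place, the proof is really a clean reduction of each case to one of the two hypotheses, with Proposition \ref{indlemma} supplying the bridge in Case~1 and the finite-rank argument supplying the bridge in Case~2.
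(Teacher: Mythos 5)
Your argument is correct and takes essentially the same route as the paper: the same case split according to whether $\{a_i\}$ is $\dcl_{\Cal L}$-dependent or $\dcl_{\Cal L}$-independent over $U(\mathbb M)$, with Proposition~\ref{indlemma} feeding hypothesis~(1) in the first case and the rank-at-most-$q$ bound on $\dcl_{\Cal L}(U(\mathbb M),\vec b)$ feeding hypothesis~(2) in the second. The only differences are cosmetic (you argue by contradiction and make the Shelah reduction to a single free variable explicit), so nothing more to add.
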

\begin{proof} Let $(a_i)_{i\in \omega}$ be an
indiscernible sequence, $\varphi(x,\vec{y})$ be an
$\Cal{L}(U)$-formula and
$\vec{b} \in \mathbb{M}^p$. We distinguish two cases.

\medskip\noindent
{Case I:} $\{a_i:i\in
\omega\}$ is $\dcl_{\Cal L}$-dependent over $U(\mathbb M)$.\\
As $(a_i)_{i\in\omega}$ is indiscernible, there is $i_0\in\omega$ such that $a_i\in\dcl_{\Cal L}(U(\mathbb M)\cup\{a_1,\dots,a_{i_0}\})$, for every $i>i_0$. Using Proposition \ref{indlemma}, take a function $f:\mathbb M^m\to\mathbb
M$ that is $\Cal{L}$-definable over $U(\mathbb
M)\cup\{a_1,\dots,a_{i_0}\}$ and an indiscernible sequence $(\vec
g_i)_{i>i_0}$ from $U(\mathbb M)^m$ such that $f(\vec g_i) = a_i$ for every
$i>i_0$.
Then by (1), the set
\begin{equation*}
\{i\in\omega \ : i>i_0\text{ and }\ \mathbb{M}\models
\varphi(f(\vec g_i),\vec{b}) \}
\end{equation*}
is finite or cofinite; hence so is the set
\begin{equation*}
\{ i \in \omega \ : \ \mathbb{M}\models \varphi(a_i,\vec{b}) \}.
\end{equation*}

\medskip\noindent
{Case II:} $\{a_i:i\in \omega\}$ is $\dcl_{\Cal L}$-independent over $U(\mathbb M)$.

\smallskip\noindent
Suppose there is an infinite set $S \subseteq \omega$ such that
for every $i \in S$, $a_i\in\dcl_{\Cal L}(U(\mathbb M),\vec{b})$. Then
\begin{equation*}
\rk\big(\{a_i:i\in\omega\}|U(\mathbb M)\big)\leq\rk\big(\{b_1,\dots,b_q\}|U(\mathbb M)\big).\end{equation*}
But this is impossible, since the first term is infinite and the second
term is finite.
Therefore we may assume that $a_i\notin\dcl_{\Cal L}(U(\mathbb M)\cup\{b_1,\dots,b_q\})$ for every $i\in\omega$ and thus we are done by using (2).
\end{proof}


\section{Dense case}\label{densepairs}
In this section $\Cal A, B, T_B$ are as in the Introduction, that is $\Cal A$ is an o-minimal structure in the language $\Cal L=\{<,\dots\}$, $B$ is a subset of $A$ and $T_B$ is the theory of $(\Cal A,B)$ in the language $\Cal L(U):=\Cal L\cup\{U\}$, where $U$ is a unary predicate not in $\Cal L$.

\medskip\noindent
Note that we do not assume that $B$ is dense in $A$. However, this section is called `{\it Dense case}' because in the applications of the main theorem below, $B$ is always dense in $A$ (see Subsections \ref{dense-subsection} and \ref{mann}).

 \begin{thm}\label{nipisoc} Suppose that for every model $(\Cal M,N)$ of $T_B$ the following hold:
\begin{itemize}
              \item [(i)] every subset of $N^n$ definable in $(\Cal M,N)$ is a boolean combination of sets of the
form $S\cap K$, where $S \subseteq M^n$
is definable in $\Cal M$ and $K\subseteq M^n$ is $\emptyset$-definable in $(\Cal M,N)$,
              \item [(ii)]  every subset of $M$ definable in $(\Cal M,N)$
 is a boolean combination of subsets of $M$ defined by
\begin{equation*}
\exists y_1 \cdots \exists y_q \ U(y_1) \wedge\dots \wedge U(y_q) \wedge
\varphi(x,y_1,\dots,y_q),
\end{equation*}
where $x,y_1,\dots,y_q$ are distinct variables and $\varphi(x,y_1,\dots,y_q)$ is a quantifier-free $\Cal{L}$-formula,
\item[(iii)] every open subset of $M$ definable in $(\Cal M,N)$ is a finite union of intervals.
\end{itemize}
Then $T_B$ is dependent.
\end{thm}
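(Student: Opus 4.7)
The plan is to verify conditions (1) and (2) of Proposition \ref{mainthm}, combining the decomposition hypotheses (i)--(iii) with the fact that the $\Cal L$-reduct of $\mathbb M$ is o-minimal and hence NIP.

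For (1), let $\varphi(\vec x, \vec y)$ be an $\Cal L(U)$-formula, $(\vec g_i)_{i\in\omega}$ an indiscernible sequence in $U(\mathbb M)^p$, and $\vec b \in \mathbb M^q$. Apply (i) to the definable set $\{\vec x \in U(\mathbb M)^p : \mathbb M \models \varphi(\vec x, \vec b)\}$, writing it as a boolean combination of sets $S_j \cap K_j$ with $S_j \subseteq M^p$ being $\Cal L$-definable and $K_j \subseteq M^p$ being $\emptyset$-definable in $\mathbb M$. Since $\vec g_i \in U(\mathbb M)^p$, the truth value of $\varphi(\vec g_i, \vec b)$ coincides with the corresponding boolean combination of $S_j(\vec g_i) \wedge K_j(\vec g_i)$. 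Each $K_j(\vec g_i)$ is constant in $i$ by $\emptyset$-definability together with $\Cal L(U)$-indiscernibility, and each $S_j(\vec g_i)$ is finite-or-cofinite in $i$ because $(\vec g_i)$ is also $\Cal L$-indiscernible and the $\Cal L$-theory is NIP. Thus $\{i : \mathbb M \models \varphi(\vec g_i, \vec b)\}$ is finite or cofinite.

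For (2), given indiscernible $(a_i)$ with $a_i \notin \dcl_\Cal L(U(\mathbb M), \vec b)$ for every $i$, apply (ii) (uniformly in $\vec b$, by a routine compactness argument in the saturated monster) to decompose $\varphi(x, \vec b)$ as a boolean combination of
\[ E_j(x) := \exists y_1 \cdots \exists y_{q_j}\, \bigwedge_k U(y_k) \wedge \psi_j(x, \vec y, \vec b), \]
where each $\psi_j$ is quantifier-free $\Cal L$. The key claim is $E_j \setminus \operatorname{int}(E_j) \subseteq \dcl_\Cal L(U(\mathbb M), \vec b)$. Given $x \in E_j$, pick $\vec u \in U(\mathbb M)^{q_j}$ with $\psi_j(x, \vec u, \vec b)$; by o-minimality the $\Cal L$-slice $Z_{\vec u} := \{x' \in M : \psi_j(x', \vec u, \vec b)\} \subseteq E_j$ is a finite union of points and open intervals, so either $x$ is in the interior of $Z_{\vec u}$ (and hence in $\operatorname{int}(E_j)$), or $x$ is an endpoint or isolated point of $Z_{\vec u}$, hence in $\dcl_\Cal L(\vec u, \vec b) \subseteq \dcl_\Cal L(U(\mathbb M), \vec b)$. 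Together with the $\dcl$-assumption on $(a_i)$, this yields $E_j(a_i) \Leftrightarrow a_i \in \operatorname{int}(E_j)$.

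By (iii), $\operatorname{int}(E_j) = \bigcup_{k=1}^{K_j}(\alpha_{j,k}, \beta_{j,k})$ for some $\alpha_{j,k} < \beta_{j,k}$ in $\mathbb M$. The $\Cal L(U)$-indiscernibility of $(a_i)$ passes to the $\Cal L$-reduct, forcing $(a_i)$ to be either constant or strictly monotone; in either case, for each such interval the set $\{i : a_i \in (\alpha_{j,k}, \beta_{j,k})\}$ is an interval in $\omega$, and a finite boolean combination of such is finite or cofinite in $\omega$. Hence $\{i : \mathbb M \models \varphi(a_i, \vec b)\}$ is finite or cofinite, completing (2). The main obstacle is the boundary inclusion $E_j \setminus \operatorname{int}(E_j) \subseteq \dcl_\Cal L(U(\mathbb M), \vec b)$: it is what couples the $\dcl$-independence hypothesis of (2) to hypothesis (iii), and it is precisely where the o-minimal cell structure of the $\Cal L$-slices over $U$ plays its role.
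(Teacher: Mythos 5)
Your proof is correct and follows essentially the same route as the paper: reduce to the two hypotheses of Proposition \ref{mainthm}, then use (i) together with NIP of the o-minimal reduct for sequences inside $U(\mathbb M)^p$, and use (ii)+(iii) together with the $\dcl_{\Cal L}$-independence assumption for the $p=1$ case. The only cosmetic difference is that the paper passes to $X = \bigcup_{\vec g}\operatorname{Int}(X_{\vec g})$ directly (which is open and definable, hence a finite union of intervals by (iii)), whereas you work with $\operatorname{int}(E_j)$; both coincide on the points $a_i$ by the $\dcl$-independence argument, and the final monotonicity observation you spell out is the content of the paper's terse ``therefore $J$ is finite or cofinite.''
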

\begin{proof}
Let $\psi(\vec x,\vec y)$ be an $\Cal L(U)$-formula, $(\vec a_i)_{i\in\omega}$ an indiscernible sequence from $\mathbb M^p$, $\vec b\in\mathbb M^q$.
We use Proposition \ref{mainthm} to conclude that the set
$$J:=\{i\in\omega: \mathbb M\models \psi(\vec a_i,\vec b)\}$$
is either finite or cofinite.
First consider the case that $\vec a_i\in (U(\mathbb M))^p$ for every $i\in\omega$.

\medskip\noindent
By (i), we can assume that there are an $\Cal{L}$-formula $\varphi(\vec{x},\vec{y},\vec{z})$, an
$\Cal{L}(U)$-formula $\chi(\vec{x})$ without parameters and a tuple $\vec{c} \in
\mathbb{M}^l$ such that
\begin{equation*}
\mathbb{M} \models \psi(\vec a_i,\vec{b}) \textrm{ iff }
\mathbb{M} \models \varphi(\vec a_i,\vec{b},\vec{c})\wedge
\chi(\vec a_i),
\end{equation*}
for every $i\in\omega$. Since o-minimal theories are dependent and $(\vec a_i)_{i\in\omega}$ is an indiscernible sequence, we have that $J$ is finite or cofinite.

\medskip\noindent
In the rest of the proof we assume that $p=1$ and $a_i\notin\dcl\big(U(\mathbb M),\vec{b}\big)$ for every $n$. By (ii), we can assume that there is
an $\Cal{L}$-formula $\varphi(x,\vec{y})$ such that $\psi(x,\vec y)$
is equivalent to
\begin{equation*}
\exists z_1\cdots\exists z_m(\bigwedge_{i=1}^{m} U(z_i)\wedge
\varphi(x,\vec{y},z_1,\dots,z_m)).
\end{equation*}
For every $\vec{g} \in U(\mathbb M)^m$, the set
\begin{equation*}
X_{\vec{g}}:=\{ a\in\mathbb M \ : \ \mathbb{M} \models
\varphi(a,\vec{b},\vec{g})\}
\end{equation*}
is a finite union of intervals and points. Since each $a_n$ is $\dcl$-independent from $\vec{b},\vec{g}$, we have that $a_i\in X_{\vec{g}}$ iff $a_i \in \text{Int}(X_{\vec{g}})$, {\it the interior of} $X_{\vec{g}}$ . Set $$X:=\bigcup_{\vec{g} \in U(\mathbb M)^p} \text{Int}(X_{\vec{g}}).$$
Hence $X$ is open and for every $i\in\omega$
$$
\mathbb M \models \psi(a_i,\vec{b}) \text{ iff } a_i \in X.
$$
As $X$ is open, it is a finite
union of open intervals by (iii). Therefore $J=\{i:a_i\in X\}$ and so it is either finite or cofinite.
\end{proof}

\subsection{Dense pairs}\label{dense-subsection}
Here we observe that dense pairs of o-minimal structures as defined below are dependent.
In the setting we are interested in, these structures are defined and studied for the first time by van den Dries in \cite{densepairs}.

\medskip\noindent
Let $T$ be a complete o-minimal theory expanding the theory of ordered abelian groups with a distinguished positive element $1$. Let $\Cal L$ be the language of $T$ and $\Cal L(U)$ as before. A pair $(\Cal M,\Cal N)$ of models of $T$ is called a {\it dense pair} if $\Cal N\preceq \Cal M$, $M\neq N$ and $N$ is dense in $M$. Let $T_d$ be the theory of of such pairs $(\Cal M,\Cal N)$ in the language $\Cal L(U)$.
%
%
%
%
%
%

\medskip\noindent
By Theorem 2.5 of \cite{densepairs}, $T_d$ is complete; hence it equals $T_N$ (as defined in the Introduction) for any model $(\Cal M,\Cal N)$. Then the conditions (i), (ii) and (iii) of Theorem \ref{nipisoc} for $T_d$ are Theorems 1, 2 and 4 in \cite{densepairs}. So we get the following.
\begin{cor}\label{dependentdensepairs}
 The theory $T_d$ is dependent.
\end{cor}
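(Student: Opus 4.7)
The plan is to apply Theorem \ref{nipisoc} directly to $T_d$. The paragraph preceding the corollary has already done the bookkeeping: $T_d$ is complete by Theorem 2.5 of \cite{densepairs}, so it coincides with the theory $T_N$ of any dense pair $(\Cal M,\Cal N)\models T_d$ in the sense of the Introduction, placing it in the framework in which Theorem \ref{nipisoc} is formulated.

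Next I would verify the three hypotheses of Theorem \ref{nipisoc} for $T_d$. Each of them is, on the nose, a theorem from \cite{densepairs}: hypothesis (i), describing definable subsets of $N^n$ as boolean combinations of traces $S\cap K$ of $\Cal L$-definable sets with $\emptyset$-definable sets of the pair, is Theorem 1; hypothesis (ii), giving the $\exists\,\vec y\,U(\vec y)\wedge\varphi$-normal form for definable subsets of $M$, is Theorem 2; and hypothesis (iii), asserting that definable open subsets of $M$ are finite unions of intervals, is Theorem 4. All three of these hold in every model of $T_d$, so the universal-quantifier-over-models requirement of Theorem \ref{nipisoc} is met.

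With the three hypotheses in place, Theorem \ref{nipisoc} immediately yields that $T_d$ is dependent. I expect no real obstacle: the conceptual work is split cleanly between the reduction supplied by Theorem \ref{nipisoc} and van den Dries's structural analysis of definable sets in dense pairs, and this corollary is simply the combination of the two.
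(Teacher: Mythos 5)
Your proposal follows the paper's argument exactly: invoke the completeness of $T_d$ (Theorem 2.5 of van den Dries) to identify it with $T_N$, then cite Theorems 1, 2, and 4 of the same paper to verify hypotheses (i), (ii), (iii) of Theorem \ref{nipisoc}, and conclude. This is the paper's own proof, with no deviation.
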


\begin{remark} Assuming that $T$ has a distinguished positive element $1$ is not necessary for the conclusion of Corollary \ref{dependentdensepairs}. Let $\mathcal L'$ be the language $\mathcal L$ augmented by a constant symbol $c$ and let $T'$ be the extension of $T$ by the axiom $c>0$. Clearly, $T'_d$ is dependent if and only if $T_d$ is dependent. Therefore by Corollary \ref{dependentdensepairs}, $T'_d$ is dependent.
\end{remark}


\subsection{Groups with the Mann property}\label{mann}
In this part, $\Cal{L}$ is the language of ordered rings and $\R$ denotes both the (ordered) real field in that language and its underlying set.

\medskip\noindent
Let $\Gamma$ be a dense subgroup of $\R^{>0}$. We say that $\Gamma$ has the {\it Mann property} if for every $a_1,\dots,a_n\in\Q^\times$, there are only finitely many $(\gamma_1,\dots,\gamma_n)\in\Gamma^n$ such that $a_1\gamma_1+\cdots+a_n\gamma_n=1$ and $\sum_{i\in I}a_i\gamma_i\neq 0$ for every proper nonempty subset $I$ of $\{1,\dots,n\}$. Every multiplicative subgroup of finite rank in $\R^{>0}$ has the Mann property; see \cite{evertse-etal}.

\medskip\noindent
In the rest of this section we assume that $\Gamma$ has the Mann property and $\Gamma/\Gamma^{[p]}$ is finite for every prime $p$, where $\Gamma^{[p]}:=\{\gamma^p:\gamma\in\Gamma\}$. Let $\Cal L(U;\Gamma)$ be the language $\Cal L(U)$ augmented by a name for each element of $\Gamma$, and let $T(\Gamma)$ be the theory of $\big(\R,\Gamma,(\gamma)_{\gamma\in\Gamma}\big)$ in that language. Note that if $(R,G,(\gamma)_{\gamma\in\Gamma})$ is a model of $T(\Gamma)$, then $R$ contains a copy of $\Q(\Gamma)$ and $\Gamma$ is a pure subgroup of $G$.  From now on we denote models of $T(\Gamma)$ by $(R,G)$ rather than $\big(R,G,(\gamma)_{\gamma\in\Gamma}\big)$.

\medskip\noindent
Condition (ii) of Theorem \ref{nipisoc} is just Theorem 7.5 of \cite{vdDG} and condition (iii) 
follows directly from Lemmas 30 and 32 of \cite{BEG}.
%
%
%

\medskip\noindent
For a multiplicative group $G$, a tuple $\vec k=(k_1,\dots,k_n)$ of
integers, and $m>0$ let
$$G_{m,\vec k}:=\{(g_1,\dots,g_n)\in G^n:g_1^{k_1}\cdots g_n^{k_n}\in G^{[m]}\},$$
a subgroup of the group $G^n$.

\medskip\noindent
It follows from the assumption that $\Gamma/\Gamma^{[p]}$ is finite that $\Gamma_{m,\vec k}$ is of finite index in $\Gamma^n$ for every $m,n>0$ and $\vec k\in\Z^n$. Hence $G_{m,\vec k}$ is of finite index in $G^n$ for every $m,n>0$, $\vec k\in\Z$ whenever $(R,G)$ is a model of $T(\Gamma)$. Moreover in that case we may choose coset representatives for $G_{m,\vec k}$ in $G^n$ from $\Gamma^n$.
Now condition (i) of Theorem \ref{nipisoc} follows from the the next statement.

\begin{fact}(\cite{BEG}, Proposition 53)
 Let $(R,G)$ be a model of $T(\Gamma)$. A subset of $G^n$ definable in $(R,G)$ is a boolean combination of sets of the form $F\cap\vec\gamma G_{m,\vec k}$, where $F\subseteq R^n$ is definable in the ordered field $R$, $m>0$, $\vec k\in\Z^n$, and $\vec\gamma\in\Gamma^n$.
\end{fact}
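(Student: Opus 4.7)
The goal is to describe the induced structure on $G^n$ in the pair $(R,G)$, and my plan is to mimic the style of proof that gave the corresponding descriptions of definable subsets of $R$ (i.e.\ condition (ii) already proved in \cite{vdDG}), combined with a careful multivariate application of the Mann property. First I would seek a normal form: show, by an induction on formula complexity plus a reduction in the style of Theorem 7.5 of \cite{vdDG}, that every $\mathcal L(U;\Gamma)$-formula defining a subset of $G^n$ is equivalent to a boolean combination of formulas of the shape
\begin{equation*}
\exists y_1\in U \cdots \exists y_s\in U\; \varphi(\vec x,y_1,\dots,y_s,\vec c),
\end{equation*}
where $\varphi$ is quantifier-free in $\mathcal L$ and $\vec c$ is a tuple of parameters in $\Gamma$ (the latter are harmless since we have names for them in the language).

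Next I would attack the atomic pieces. After plugging in variables $\vec g\in G^n$, each $\mathcal L$-atomic condition becomes a polynomial (in)equality $f(\vec g)=0$ or $f(\vec g)>0$ with $f\in R[\vec x, \vec y]$, which we may expand as $\sum_{i=1}^N c_i\,\vec g^{\vec \alpha_i}\,\vec y^{\vec \beta_i}$ with $c_i\in R$ and integer exponent vectors. The plan is to stratify the solution set in $G^n$ by the equivalence relation on the index set $\{1,\dots,N\}$ given by multiplicative dependence: declare $i\sim j$ when $\vec g^{\vec\alpha_i-\vec\alpha_j}$ equals a fixed element of $G$ (the $\vec y$-monomials being absorbed into parameters, and after fixing a finite set of representatives for their coset classes modulo $G_{m,\vec k}$ by the finite-index hypothesis). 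Each refinement of the partition is cut out by finitely many conditions of the form $\vec g^{\vec k}\in G^{[m]}$, that is, by membership in a coset $\vec\gamma G_{m,\vec k}$.

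On each such stratum, the monomials $\vec g^{\vec\alpha_i}$ become, up to the chosen coset representative in $\Gamma$, proportional; collecting terms turns the original polynomial condition into a new polynomial condition over $R$ in which no vanishing subsum of the original occurs non-trivially. This is exactly the situation handled by the (multivariate form of the) Mann property: there are only finitely many $\Gamma$-tuples producing a ``degenerate'' vanishing subsum, and all other solutions are constrained purely by conditions that are definable in the field $R$ once the coset information is fixed. Taking finite unions and intersections over the stratification then expresses the original definable set as a boolean combination of sets of the form $F\cap\vec\gamma G_{m,\vec k}$, as required.

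The main obstacle I expect is Step two: properly executing the combinatorial Mann-property analysis in $n$ variables. The one-variable Mann property gives finiteness of non-degenerate unit solutions, but the assertion here is a uniform statement across a family of polynomial conditions, and one must use that $\Gamma^n/\Gamma_{m,\vec k}$ is finite for all $m,\vec k$ (which follows from the assumption on $\Gamma/\Gamma^{[p]}$) in order to keep the number of cosets finite at each stratification step. I would expect the hardest bookkeeping to lie in checking that the $R$-definable conditions arising on each stratum are genuinely parameter-definable over $R$ alone, with the $U$-quantifiers and $\Gamma$-constants cleanly separated out into the coset factor; this is essentially the content of the cited \cite{BEG}, Proposition 53, and is what makes the resulting decomposition a boolean combination of the claimed product form rather than something more entangled.
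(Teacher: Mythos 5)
The paper does not actually prove this statement: it is introduced as a \emph{Fact} and dispatched with a citation to \cite{BEG}, Proposition~53, so there is no internal proof against which to compare your sketch. Read on its own merits, your outline captures the broad architecture that such arguments typically follow --- a near--model-completeness reduction in the spirit of \cite{vdDG} to pull out the $U$-quantifiers and $\Gamma$-parameters, followed by a coset stratification and a Mann-property finiteness argument on each piece --- but two of the steps as you describe them are not yet correct and would need repair before this constitutes a proof.

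First, your use of the Mann property is inverted. The Mann property asserts finiteness of \emph{non-degenerate} solutions of unit equations $a_1\gamma_1+\cdots+a_n\gamma_n=1$ (those with no vanishing proper subsum); the degenerate solutions, far from being ``only finitely many,'' are the bulk of the solution set, and one handles them by an induction that splits off a vanishing subsum and recurses on the two shorter equations. Your sentence ``there are only finitely many $\Gamma$-tuples producing a `degenerate' vanishing subsum'' has this exactly backwards, and the induction on the number of terms is the real engine of the argument, not something that can be waved at. Second, the stratification step is underspecified in a way that hides the main difficulty. Declaring $i\sim j$ ``when $\vec g^{\vec\alpha_i-\vec\alpha_j}$ equals a fixed element of $G$'' is not a definable condition of the form $\vec g\in\vec\gamma\, G_{m,\vec k}$: the sets $G_{m,\vec k}$ record membership of a monomial $g_1^{k_1}\cdots g_n^{k_n}$ in $G^{[m]}$, which is a \emph{coset} condition, not an equation with a particular value of $G$. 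What one actually stratifies by is the tuple of cosets of the exponent-monomials modulo the various $G^{[m]}$; it is then a separate (and nontrivial) piece of bookkeeping to show that after fixing these cosets the remaining constraints are purely $R$-field-definable in $\vec g$, rather than entangled with further $U$-quantifiers. Your claim that the monomials become ``proportional'' on each stratum is not a meaningful formulation of this step. Absent these repairs the sketch does not yet establish the Fact; you would do better to consult \cite{BEG} or \cite{vdDG} directly for the precise inductive scheme.
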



\noindent
Therefore we get the desired result.
\begin{cor} The theory $T(\Gamma)$ is dependent.
\end{cor}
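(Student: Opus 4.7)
The plan is to verify the three hypotheses of Theorem~\ref{nipisoc} for a model $(R,G)$ of $T(\Gamma)$, taking as o-minimal reduct the ordered field $R$ expanded by a constant for each element of $\Gamma$, and $B:=G$. Conditions (ii) and (iii) are direct quotations from the literature: (ii) is Theorem~7.5 of \cite{vdDG}, which gives exactly the boolean combination of existential $U$-bounded formulas required; and (iii), that every open definable subset of $R$ is a finite union of intervals, is obtained by combining Lemmas~30 and 32 of \cite{BEG}. So only condition~(i) requires genuine work.

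For condition~(i), I would invoke the Fact (Proposition~53 of \cite{BEG}) stated just above: every subset of $G^n$ definable in $(R,G)$ is a boolean combination of sets of the form $F \cap \vec\gamma\, G_{m,\vec k}$, where $F \subseteq R^n$ is definable in the ordered field, and $\vec\gamma \in \Gamma^n$, $m > 0$, $\vec k \in \Z^n$. The factor $F$ is field-definable and thus serves directly as the set $S$ in condition~(i). The coset $\vec\gamma\, G_{m,\vec k}$ will play the role of $K$: the group $G_{m,\vec k}$ is $\emptyset$-definable in $(R,G)$ by the $\Cal L(U)$-formula
\[ \bigwedge_{j=1}^{n} U(x_j) \wedge \exists y\, \bigl(U(y) \wedge y^m = x_1^{k_1}\cdots x_n^{k_n}\bigr), \]
and each entry of $\vec\gamma$ is named by a constant symbol of $\Cal L(U;\Gamma)$, so the whole coset $\vec\gamma\, G_{m,\vec k}$ is itself $\emptyset$-definable in the ambient language.

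With the three conditions in place, Theorem~\ref{nipisoc} applies directly and yields that $T(\Gamma)$ is dependent. The only genuinely load-bearing step is condition~(i); within it, the lone subtle point is the compatibility between ``coset representatives from $\Gamma^n$'' in the Fact and ``$\emptyset$-definable'' in Theorem~\ref{nipisoc}. This is handled precisely by the presence of a constant symbol for each $\gamma \in \Gamma$ in $\Cal L(U;\Gamma)$. The standing hypothesis that $\Gamma/\Gamma^{[p]}$ is finite for every prime $p$ is exactly what guarantees that $G_{m,\vec k}$ has finite index in $G^n$ with representatives drawn from $\Gamma^n$, which is what allows the Fact to be deployed in the form required here.
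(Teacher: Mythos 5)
Your proof is correct and follows the paper's own argument exactly: both verify conditions (i)--(iii) of Theorem~\ref{nipisoc} via the same citations (Theorem~7.5 of \cite{vdDG} for (ii), Lemmas~30 and 32 of \cite{BEG} for (iii), and Proposition~53 of \cite{BEG} for (i)), with the finite-index hypothesis on $\Gamma/\Gamma^{[p]}$ and the constants for $\Gamma$ in $\Cal L(U;\Gamma)$ playing precisely the role you describe in deducing (i). The only difference is that you spell out explicitly why $\vec\gamma\,G_{m,\vec k}$ is $\emptyset$-definable, a step the paper leaves implicit.
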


%

\begin{remark} Tychonievich showed in \cite{tychonievich} that for every subgroup $\Gamma$ of $\R^{>0}$ of finite rank, the expansion of $(\R,\Gamma)$ by the restriction of the exponential function to the unit interval defines $\mathbb Z$ and  hence is not dependent. However, in \cite{tau} proper o-minimal expansions $\mathcal R$ of $\R$ and finite rank subgroups $\Gamma$ are constructed such that the structure $(\mathcal R,\Gamma)$ satisfies the assumptions of Theorem \ref{nipisoc} and thus is dependent.
\end{remark}

\section{The discrete case}

Let $T$ be a complete o-minimal theory extending the theory of ordered abelian groups and let $\mathcal{L}$ be its language. After extending it by constants and by definitions, we may assume
that $T$ admits quantifier elimination and has universal axiomatization. Then any
substructure of any model of T is an elementary submodel. Hence $\dcl_{\Cal L}(X) = \langle X \rangle$ for any subset
$X$ of any model $\mathcal A$ of $T$; here $\langle X \rangle$ denotes the substructure of $\mathcal A$ generated by $X$. For $\mathcal B \preceq \mathcal A$ we denote $\langle B \cup X\rangle$ by $\mathcal B \langle X \rangle$.\\

\noindent In this section, we extend $\mathcal{L}$ to $\mathcal{L}(\mathfrak f)$ by adding a unary function
symbol $\mathfrak f$ which is not in $\Cal L$. Let $T(\mathfrak f)$ be a complete $\mathcal{L}(\mathfrak f)$-theory
extending $T$ and $\mathbb M$ a monster model of $T(\mathfrak f)$. For a model $(\mathcal A, \mathfrak f)$ of $T(\mathfrak f)$ and $X\subseteq A$, the $\Cal L(U)$-substructure of $(\Cal A,\mathfrak f)$
generated by $X$ is called the $\mathfrak f${\it -closure} of $X$ and denoted by $X^{\mathfrak f}$. Clearly $X^{\mathfrak f}\preceq \mathcal A$.




\begin{thm}\label{thmdiscpair}
Suppose that the following conditions hold.
\begin{itemize}
   \item [(i)] The theory $T(\mathfrak f)$ has quantifier elimination.
   \item [(ii)] For every $(\Cal A,\mathfrak f)\models T(\mathfrak f)$, $\Cal B\preceq
\Cal A$ with $\mathfrak f(B)\subseteq B$ and every $c_1,\dots,c_n\in A$, there are $d_1,\dots,d_n \in A$ such that
\begin{equation*}
 \mathfrak f\big(\Cal B\langle c_1,\dots,c_n\rangle\big) \subseteq  \langle\mathfrak f(B),d_1,\dots,d_n \rangle,
\end{equation*}
 \item [(iii)] Let $f,g$ be $\mathcal L$-terms of arities $m+k$ and $n+l$ respectively, $(\vec a_i)_{i\in \omega}$ an indiscernible sequence from $\mathbb M^m$
with $a_{i,1},\dots,a_{i,n}
\in \mathfrak f(\mathbb{M})$ for every $i\in\omega$, $\vec b_1 \in \mathbb{M}^k$ and
$\vec b_2 \in (\mathfrak f(\mathbb{M}))^l$. Then the set
\begin{equation*}
\big\{ i \in \omega \ : \ \mathbb{M} \models \mathfrak f(f(\vec
a_{i},\vec b_1))=g(a_{i,1},\dots,a_{i,n},\vec b_2)\big\}
\end{equation*}
 is finite or cofinite.
 \end{itemize}
Then $T(\mathfrak f)$ is dependent.
\end{thm}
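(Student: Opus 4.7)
The plan is to apply Proposition \ref{mainthm} with $U(\mathbb M) := \mathfrak f(\mathbb M)$ and verify its two conditions. In either case I will suppose for contradiction that $\{i : \mathbb M \models \varphi(\vec a_i, \vec b)\}$ is neither finite nor cofinite for some $\mathcal L(\mathfrak f)$-formula $\varphi$, indiscernible $(\vec a_i)$ of the appropriate kind, and tuple $\vec b$. Applying Proposition \ref{shelahprop} produces a sequence $(\vec c_i)_{i \in \omega}$ with $(\vec c_{2i})$ indiscernible over $\vec b$, with $\varphi(\vec c_i, \vec b)$ holding iff $i$ is even, and sharing the $\mathcal L(\mathfrak f)$-type of $(\vec a_i)$ over $\emptyset$---so $\vec c_i$ still lies in $\mathfrak f(\mathbb M)^p$ in case (1), and $c_i \notin \dcl_{\mathcal L}(\mathfrak f(\mathbb M), \vec b)$ still holds in case (2).

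By condition (i), $\varphi$ is equivalent to a quantifier-free $\mathcal L(\mathfrak f)$-formula, and since finite/cofinite is preserved under Boolean combinations, it will be enough to handle a single atomic formula $s \Box t$ ($\Box \in \{=, <\}$) with $s, t$ $\mathcal L(\mathfrak f)$-terms. Peeling off the $\mathfrak f$'s in $s$ and $t$ syntactically, I can write
\[
s(\vec c_i, \vec b) = \sigma(\vec c_i, \vec b, \vec \eta_i), \qquad t(\vec c_i, \vec b) = \sigma'(\vec c_i, \vec b, \vec \eta_i),
\]
where $\sigma, \sigma'$ are $\mathcal L$-terms, $\vec \eta_i = (\eta_{i,1}, \ldots, \eta_{i,N}) \in \mathfrak f(\mathbb M)^N$, and each $\eta_{i,j} = \mathfrak f\bigl(\rho_j(\vec c_i, \vec b, \eta_{i,1}, \ldots, \eta_{i,j-1})\bigr)$ for $\mathcal L$-terms $\rho_j$.

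The main obstacle will be to show that on a cofinite subset of $\omega$ the tuple $\vec \eta_i$ is constant in $i$. I plan to do this by induction on $j$, combining (ii) and (iii). Assuming $\eta_{i,k} = \eta^*_k$ cofinitely often for each $k < j$, the equation $\eta_{i,j} = \mathfrak f\bigl(\rho_j(\vec c_i, \vec b, \eta^*_1, \ldots, \eta^*_{j-1})\bigr)$ falls under the scope of (iii) (with $\vec b_1 := (\vec b, \eta^*_1, \ldots, \eta^*_{j-1})$ and any candidate $\eta \in \mathfrak f(\mathbb M)$ placed in $\vec b_2$), so $\{i : \eta_{i,j} = \eta\}$ is finite or cofinite for every $\eta \in \mathfrak f(\mathbb M)$. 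Meanwhile, iterated application of (ii) to the $\mathfrak f$-closed substructure $\langle \mathfrak f(\mathbb M)\rangle$ (which contains each $\vec c_i$) extended by $\vec b$ confines every $\eta_{i,j}$ inside a fixed $\langle \mathfrak f(\mathbb M), \vec d\rangle$ for a tuple $\vec d$ independent of $i$, restricting the pool of possible candidate values enough that exactly one value $\eta^*_j$ can be attained on a cofinite subset; passing to that subset completes the inductive step.

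Once $\vec \eta_i = \vec \eta^*$ is constant, $\varphi(\vec c_i, \vec b)$ reduces to the purely $\mathcal L$-formula $\sigma(\vec c_i, \vec b, \vec \eta^*) \Box \sigma'(\vec c_i, \vec b, \vec \eta^*)$ with fixed parameters in $\mathbb M$. Since $(\vec c_i)$ remains $\mathcal L$-indiscernible over $\emptyset$ and $T$ is o-minimal (hence dependent), this set must be eventually constant---contradicting the alternation from Proposition \ref{shelahprop}, which survives restriction to any cofinite subset. This verifies both conditions of Proposition \ref{mainthm} and finishes the proof; the delicate point throughout is the inductive reduction of $\vec \eta_i$ to a constant tuple, where (iii) tames the equational behavior of each $\mathfrak f$-value on the indiscernible sequence and (ii) prevents the possibility of infinitely many distinct values, each occurring only finitely often.
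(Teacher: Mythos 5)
Your high-level plan (reduce to Proposition~\ref{mainthm} with $U(\mathbb M):=\mathfrak f(\mathbb M)$, apply Proposition~\ref{shelahprop}, use (i) to go quantifier-free, and then process the $\mathfrak f$-terms from the inside out using (ii) and (iii)) is a reasonable reorganization, but there is a genuine gap at the crux: the claim that on a cofinite subset the tuple $\vec \eta_i$ becomes \emph{constant}. Conditions (ii) and (iii) simply do not deliver this. Condition (ii) confines each $\eta_{i,j}$ to a fixed substructure $\langle\mathfrak f(\mathbb M),\vec d\rangle$, but that substructure is infinite, so it does not ``restrict the pool of candidates'' to anything finite; and condition (iii) only says that for each \emph{fixed} candidate $\eta$ the set $\{i:\eta_{i,j}=\eta\}$ is finite or cofinite, which is perfectly compatible with $\eta_{i,j}$ taking infinitely many distinct values, each finitely often. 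Indeed the claim is false in the very cases of interest: if $\mathfrak f$ is idempotent (as for $\operatorname{st}$ or $\lambda$) and $\rho_j$ is a coordinate projection onto a coordinate of $\vec c_i$ lying in $\mathfrak f(\mathbb M)$, then $\eta_{i,j}=\mathfrak f(c_{i,j'})=c_{i,j'}$ varies with $i$. So the proposed reduction to an $\mathcal L$-formula with fixed parameters cannot go through as stated.

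The paper avoids this entirely by running a different induction, namely on the number $d(\psi)$ of occurrences of $\mathfrak f$ in the quantifier-free formula $\psi$. It does not try to freeze the $\mathfrak f$-values. Instead, using (ii) it writes a single innermost term $\mathfrak f(f(\vec a_{i_v},\vec b))$ as an $\mathcal L$-term $g$ applied to finitely many $\mathcal L(\mathfrak f)$-terms $t_1(\vec a_{i_1},\dots,\vec a_{i_k}),\dots,t_n(\vec a_{i_1},\dots,\vec a_{i_k})$ (with values in $\mathfrak f(\mathbb M)$) together with fixed parameters $d_1,\dots,d_m$; then it extends the indiscernible sequence to $(\vec a_{i_1},\dots,\vec a_{i_k},t_1(\dots),\dots,t_n(\dots))$ indexed over $k$-element subsets $\mathscr K\cup\mathscr L$, and uses (iii) to upgrade ``holds on the infinite set $\mathscr K$'' to ``holds cofinitely.'' Substituting this identity into $\psi$ turns the eliminated $\mathfrak f$ into a term in the \emph{new free coordinates}, producing a formula $\psi'$ with $d(\psi')<d(\psi)$ evaluated along a genuine indiscernible sequence. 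That is the mechanism that makes the induction close; it is not a constancy argument. To repair your proposal you would need essentially this device: rather than showing each $\eta_{i,j}$ is eventually constant (which fails), you should enlarge the indiscernible sequence by adjoining the $\mathfrak f$-terms as new coordinates (cf.\ Proposition~\ref{indiscernible1} and the construction in the paper's proof), so that in the residual formula those terms are treated as fresh variables and hence no longer contribute to the $\mathfrak f$-count.

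Two smaller remarks. First, routing through Proposition~\ref{mainthm} requires translating between the $\mathcal L(\mathfrak f)$ and $\mathcal L(U)$ settings; this is harmless since $\mathfrak f(\mathbb M)$ is $\mathcal L(\mathfrak f)$-definable and the proof of Proposition~\ref{mainthm} uses $U(\mathbb M)$ only as a set, but it does deserve a sentence. Second, the assertion that $c_i\notin\dcl_{\mathcal L}(\mathfrak f(\mathbb M),\vec b)$ survives the passage from $(\vec a_i)$ to $(\vec c_i)$ via Proposition~\ref{shelahprop} is not immediate (the condition mentions $\vec b$, whereas Proposition~\ref{shelahprop} only preserves the $\emptyset$-type of the sequence), and would also need justification if you keep the Proposition~\ref{mainthm} framing.
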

%
%
%
\begin{proof}
By (i), we just need to show that for every quantifier-free
$\mathcal{L}(\mathfrak f)$-formula $\psi$, indiscernible sequence
$(\vec a_i)_{i\in\omega}$ and tuple $\vec{b}\in\mathbb{M}^m$
\begin{equation}\label{tamethmeq}J:=\{ i \in \omega \ : \ \mathbb{M} \models
\psi(\vec a_i,\vec{b})\} \textrm{ is  finite or cofinite.}
\end{equation}
We will prove this by induction on the number $d(\psi)$ of times $\mathfrak f$
occurs in $\psi$. If $d(\psi)=0$, this follows just from the fact that o-minimal theories are dependent. Suppose \eqref{tamethmeq} holds for all
quantifier-free $\mathcal{L}(\mathfrak f)$-formulas $\psi'$ with $d(\psi')<d$, and for a contradiction let $\psi$ be a quantifier-free
$\mathcal{L}(\mathfrak f)$-formula such that $d(\psi)=d$, $(\vec a_i)_{i\in\omega}$
an indiscernible sequence and $\vec{b}\in\mathbb{M}^m$ such that
\eqref{tamethmeq} does not hold for $\psi$. Then by Proposition \ref{shelahprop}, we can assume
that $(\vec a_i)_{i\in J}$ is an indiscernible sequence over
$\vec{b}$.  Since $d>0$ and $\psi$
is quantifier-free, there is an $\mathcal L$-term $f$ such that the term $\mathfrak f(f(\vec a_i,\vec{b}))$ occurs in
$\psi(\vec a_i,\vec{b})$.  Now let $\Cal A$ be the $\mathfrak f$-closure of $\{\vec a_i:i \in
J\}$. By (ii), there are $d_1,\dots,d_m\in \mathbb M$ such that
\begin{equation*}
\mathfrak f(\Cal A \langle \vec{b} \rangle) \subseteq \langle \mathfrak f(A),d_1,\dots,d_m
\rangle.
\end{equation*}
Then for every $j\in J$ we have
\begin{equation*}
\mathfrak f(f(\vec a_j,\vec{b})) \in \langle \mathfrak f(A),d_1,\dots,d_m \rangle.
\end{equation*}
Because $(\vec a_i)_{i\in J}$ is an indiscernible sequence over
$\vec{b}$, there exist natural numbers $k$ and $v$
with $1 \leq v \leq k$,  an $\mathcal L$-term $g$ and $\mathcal L(\mathfrak f)$-terms $t_1,\dots,t_n$ such that for every increasing sequence
$i_1<\dots<i_k$ of elements of $J$
\begin{equation}\label{caseIeq}
\mathfrak f(f(\vec a_{i_v},\vec{b}))=g(t_1(\vec a_{i_1},\dots,\vec a_{i_k}),\dots,t_n(\vec a_{i_1},\dots,\vec a_{i_k}),d_1,\dots,d_m).
\end{equation}
\noindent Take an infinite subset $\mathscr K$ of the set of $k$-element subsets of
$J$ and take an infinite subset $\mathscr L$ of the set of $k$-element
subsets of $\omega\setminus J$ such that for every $S,S'\in\mathscr K\cup\mathscr L$ either $s<s'$ for every $s\in S$, $s'\in S'$ or $s'<s$ for every $s\in S$, $s'\in S'$.
%
%
\noindent Since $(\vec a_i)_{i \in
\omega}$ is indiscernible, the sequence
\begin{equation*}
(\vec a_{i_1},\dots,\vec a_{i_k},t_1(\vec a_{i_1},\dots,\vec a_{i_k}),\dots,t_n(\vec a_{i_1},\dots,\vec a_{i_k}))_{\{i_1,\dots,i_k\}
\in \mathscr K \cup \mathscr L}
\end{equation*}
is indiscernible as well. By \eqref{caseIeq}, the equation
\begin{equation*}
\mathfrak f(f(\vec a_{i_v},\vec{b}))=g(t_1(\vec a_{i_1},\dots,\vec a_{i_k}),\dots,t_n(\vec a_{i_1},\dots,\vec a_{i_k}),d_1,\dots,d_m)
\end{equation*}
holds for infinitely many element of this sequence. Because of (iii), we
get that this equations actually holds for cofinitely many elements
of the sequence. By substituting \eqref{caseIeq} in $\psi$, we get a
quantifier-free $\mathcal{L}(\mathfrak f)$-formula $\psi'$ with $d(\psi')<d$ and
\begin{equation*}
\mathbb{M} \models \psi(\vec a_{i_v},\vec{b})\leftrightarrow\psi'\big(\vec a_{i_1},\dots,\vec a_{i_k},t_1(\vec a_{i_1},\dots,\vec a_{i_k}),\dots,t_n(\vec a_{i_1},\dots,\vec a_{i_k}),\vec{b},d_1,\dots,d_m\big)
\end{equation*}
 holds for cofinitely many $\{i_1,\dots,i_k\} \in \mathscr K
\cup \mathscr L$. But
\begin{equation*}
\mathbb{M} \models \psi(\vec a_{i_v},\vec{b}) \textrm{ iff }
\{i_1,\dots,i_v,\dots,i_k\} \in \mathscr K \textrm{for some }
i_1,\dots,i_k.
\end{equation*}
Hence the set of $\{i_1,\dots,i_k\} \in \mathscr K \cup \mathscr L$ such that
$$\mathbb{M}\models
\psi'\big(\vec a_{i_1},\dots,\vec a_{i_k},t_1(\vec a_{i_1},\dots,\vec a_{i_k}),\dots,t_n(\vec a_{i_1},\dots,\vec a_{i_k}),\vec{b},d_1,\dots,d_m\big)$$
is neither finite nor cofinite in $\mathscr K \cup \mathscr L$. This
contradicts the induction hypothesis and finishes the proof. \end{proof}


\section{Tame pairs}\label{tamepairs}

In this section, we consider tame pairs of o-minimal structures which were introduced by van den Dries and Lewenberg in \cite{tame}. Let $T$ be a complete o-minimal theory extending the theory of real closed
fields and $\mathcal{L}$ its language.  After extending $T$ by
definitions, we can assume without loss of generality that $T$ has
quantifier elimination and is universally axiomatizable. A pair $\Cal A,\Cal B$ of models of $T$ is called a {\it tame pair} if $\Cal B\preceq \Cal A$, $A\neq B$ and for every $a \in A$ which is in the convex hull of
$B$, there is a unique $\operatorname{st}(a) \in B$ such that $|a-\operatorname{st}(a)|<b$ for
all $b \in B^{>0}$. Note that the function $\operatorname{st}$ can be
extended to all $A$ by setting $\operatorname{st}(a)=0$ if $a$ is not in the
convex hull of $B$, and we call the resulting map as the {\it standard part map}. Note that the $\Cal L(U)$-structure $(\Cal A,B)$ is interdefinable with the $\Cal L(\operatorname{st})$-structure $(A,\operatorname{st})$. Now let $T_t$ be the $\Cal L(\operatorname{st})$-theory of tame pairs. Since $T$ has quantifier elimination, it follows from Theorem 5.9 and Corollary 5.10 of \cite{tame} that $T(\operatorname{st})$ is complete and has quantifier
elimination.


\medskip\noindent
The theory $T_t$ is closely related
to the theory $T_c$ of pairs $(\Cal A,V)$, where $\Cal A\models T$ and $V$ is
a $T$-convex subring of $A$ and $V \neq A$ (here and below a $T$-{\it convex} subring of a model $\Cal A$ of $T$ is a convex subring that is closed under all the continuous $\Cal L$-$\emptyset$-definable unary functions). By Corollary 3.14 of \cite{tame}, the theory $T_c$ is weakly o-minimal. By Proposition 7.3 of \cite{weakly}, every weakly o-minimal theory is dependent and hence so is $T_c$.

\medskip\noindent Note that for every
model $(\Cal A,\operatorname{st})$ of $T_t$, the pair $(\Cal A,V)$ is a model of $T_c$, where $V$ is
the convex closure of $\operatorname{st}(A)$. Since $V$ is a convex subring of $A$, it is a local ring. We denote its maximal ideal by $\mathfrak{m}(V)$.
In this case, for every $b \in \operatorname{st}(A)$ and $a
\in A$ \begin{equation}\label{convexeq}
 \operatorname{st}(a)=b \textrm{ iff } a=b \textrm{ or } (a-b \in \mathfrak{m}(V)) \textrm { or } (b=0 \textrm{ and } a \notin V).
\end{equation}


\begin{prop}\label{thmresineq} Let $\mathcal{C}\preceq\mathcal{D}\models T$ and let $d\in D\setminus C$. Let $W$ and $W'$ be $T$-convex subrings
of $\mathcal{C}$ and $\mathcal{C}\<d\>$ with
$d\in W'$ and $C\cap W'=W$. Then there is $d' \in W'$ such that for every $a\in W'$
$$
a - f(d',\vec{c}) \in \mathfrak{m}(W'),
$$
for some $\vec{c} \in W^n$ and $\mathcal L$-term $f$.
\end{prop}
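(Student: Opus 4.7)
The plan is to take $d' := d$ and to show that the residue $\bar a \in \bar{W'}:=W'/\mathfrak{m}(W')$ of every $a \in W'$ lies in the $\mathcal{L}$-definable closure of $\bar W \cup \{\bar d\}$, where $\bar W := W/\mathfrak{m}(W)$. Lifting this back to $W'$ then produces an $\mathcal{L}$-term $f$ and $\vec c \in W^n$ with $a - f(d,\vec c) \in \mathfrak{m}(W')$. The hypotheses $C \cap W' = W$ together with the convexity of both valuation rings give $\mathfrak{m}(W) = W \cap \mathfrak{m}(W')$, so $\bar W$ embeds canonically into $\bar{W'}$, and both are models of $T$ by the van den Dries--Lewenberg theory of $T$-convex subrings.

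Since $T$ is universally axiomatised with quantifier elimination, every $a \in W' \subseteq \mathcal{C}\langle d\rangle$ has the form $a = t(d,\vec c')$ for some $\mathcal{L}$-term $t$ and $\vec c' \in C^n$. By convexity of $W$ inside $\mathcal{C}$, each component of $\vec c'$ either lies in $W$ or is $W$-infinite; in the latter case its reciprocal lies in $\mathfrak{m}(W) \subseteq \mathfrak{m}(W')$. After collecting the $W$-components into $\vec c \in W^k$ and setting $u_j := 1/c'_{k+j}$ for the rest, a routine rewriting of $t$ yields an $\mathcal{L}$-term $\tilde t$ with $a = \tilde t(d,\vec c,\vec u)$ and $\vec u \in \mathfrak{m}(W')^{n-k}$.

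Now view $F(\vec z) := \tilde t(d,\vec c,\vec z)$ as an $\mathcal{L}$-definable function on $\mathcal{C}\langle d\rangle$ with parameters from $W'$. Because $a = F(\vec u) \in W'$ while $\vec u$ is infinitesimal relative to $W'$, $F$ cannot diverge to $W'$-infinity as $\vec z \to \vec 0$ along the sign-quadrant of $\vec u$. Peeling off coordinates one at a time via the monotonicity theorem, $F$ admits a finite one-sided limit $L$ along that quadrant with $L \in \dcl_{\mathcal{L}}(d,\vec c)$; quantifier elimination then realises $L$ as $s(d,\vec c)$ for some $\mathcal{L}$-term $s$, and monotonicity forces $F(\vec z) - s(d,\vec c) \in \mathfrak{m}(W')$ throughout the quadrant. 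Specialising at $\vec z = \vec u$ gives $a - s(d,\vec c) \in \mathfrak{m}(W')$, and $f := s$ together with $\vec c$ fulfils the conclusion with $d' = d$.

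The main obstacle is this third step: converting the direction-dependent asymptotics of the multivariable $\mathcal{L}$-definable function $F$ near $\vec 0$ into a single $\mathcal{L}$-term approximating it modulo $\mathfrak{m}(W')$. I would carry it out by induction on $n-k$, at each stage using monotonicity to replace a coordinate by its correct one-sided limit at $0$ (with the sign determined by the corresponding $u_j$) and invoking quantifier elimination together with universal axiomatisation to realise that limit as a term value inside $W'$.
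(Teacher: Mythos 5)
The fundamental problem is your first line: the choice $d' := d$ does not work in general, and the underlying claim that every $\bar a \in \overline{W'}$ lies in $\dcl_{\mathcal L}(\overline W\cup\{\bar d\})$ is false. The element $d$ from the hypotheses is merely a generator of $\mathcal C\langle d\rangle$; it may well satisfy $\bar d \in \overline W$ (for instance $d$ could be $W'$-infinitesimal, with $\bar d = 0$) while the residue field $\overline{W'}$ is nevertheless a proper extension of $\overline W$. Concretely, take $T = \RCF$, let $\mathcal C$ be the real closure of $\mathbb Q(t)$ with $t$ infinite and $W$ the convex hull of $\mathbb Q$, and let $d$ be a new infinitesimal chosen so that $a := d\cdot t$ realises the cut of $\pi$ over $\mathcal C$. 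With $W'$ the convex hull of $W$ in $\mathcal C\langle d\rangle$, all the hypotheses hold, $\bar d = 0$, yet $\bar a = \pi \notin \overline W = \dcl_{\mathcal L}(\overline W\cup\{\bar d\})$.

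The same example exposes the error in your third step. There $F(z) = d/z$ and $\vec u = (1/t)$, so $F(\vec u) = d t = a \in W'$, yet $F(z) \to +\infty$ as $z \to 0^+$. This contradicts your assertion that ``$F$ cannot diverge to $W'$-infinity as $\vec z \to \vec 0$''. The subtlety is that $\vec u$ is $W'$-infinitesimal only in the valuation sense; the $\eta$-threshold in the definition of the limit involves the parameter $d$, which can itself be $W'$-infinitesimal, so $\vec u$ need not lie below that threshold. Consequently the one-sided limit $L$, even when it exists, gives no control on $F(\vec u) - L$ modulo $\mathfrak m(W')$, and the whole residue computation collapses. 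The paper avoids this entirely by first splitting into two cases: if every $a\in W'$ is already $\mathfrak m(W')$-close to some element of $W$ the conclusion is trivial; otherwise one picks $d'$ to be a residue-field generator (so $\bar d' \notin \overline W$), uses the Exchange principle to see $\mathcal C\langle d'\rangle = \mathcal C\langle d\rangle$, and then invokes Lemma 5.3 of \cite{tame}, which says precisely that the residue field of $W'$ is generated as a $T$-model by $\overline W$ and $\bar d'$. Choosing a residue-field generator rather than an arbitrary generator is the essential idea your proposal misses.
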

\begin{proof} If for every $a\in W'$, there is $e\in W$ such that
$$
a - e \in \mathfrak{m}(W'),
$$
then the conclusion clearly holds. So suppose that it is not the case and take $d' \in W'$ such that
$$
d' - e \notin \mathfrak{m}(W'),
$$
for every $e \in W$. Then $d' \notin C$ since $W' \cap C=W$. Therefore by the Exchange principle $\mathcal C \<d\> = \mathcal C \<d'\>$ and hence $W'\subseteq \Cal C\<d'\>$. Now apply Lemma 5.3 of \cite{tame} (by taking $\mathscr R$, $V$, $V_a$ and $a$ there  to be $\Cal C$, $W$, $W'$ and $d'$) to conclude the proof.
\end{proof}

\medskip\noindent
Now we are in a position to prove the main result of this section.
\begin{thm}\label{tamedependent} $T_t$ is dependent.
\end{thm}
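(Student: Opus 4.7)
The plan is to apply Theorem \ref{thmdiscpair} with $\mathfrak f = \operatorname{st}$; it remains to verify the three hypotheses.

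\emph{Condition} (i): as already quoted, $T_t=T(\operatorname{st})$ admits quantifier elimination by Theorem~5.9 and Corollary~5.10 of \cite{tame}.

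\emph{Condition} (ii): I would proceed by induction on $n$, introducing the $c_i$ one at a time and applying Proposition \ref{thmresineq} at each step. Let $V_\mathcal A$ denote the convex hull of $\operatorname{st}(A)$ in $\mathcal A$, which is a $T$-convex subring of $\mathcal A$, and set $B_i:=\langle B\cup\{c_1,\dots,c_i\}\rangle$. At the $i$-th step, since $T$ extends $\RCF$ one has $B_i=\langle B_{i-1}\cup\{1/c_i\}\rangle$ whenever $c_i\neq 0$, so after replacing $c_i$ by its reciprocal if necessary I may assume $c_i\in V_\mathcal A$. Setting $W_i:=V_\mathcal A\cap B_i$, this choice gives $c_i\in W_i$ and $B_{i-1}\cap W_i=W_{i-1}$, so Proposition \ref{thmresineq} applied with $\mathcal C=\mathcal B_{i-1}$, $d=c_i$, $W=W_{i-1}$, $W'=W_i$ produces a single $d'_i\in W_i$ such that every $a\in W_i$ satisfies $a-f(d'_i,\vec e)\in\mathfrak m(W_i)$ for some $\mathcal L$-term $f$ and $\vec e\in W_{i-1}^l$. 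Since continuous $\mathcal L$-definable functions commute with $\operatorname{st}$ on $V_\mathcal A$, this gives
\begin{equation*}
\operatorname{st}(a)=f(\operatorname{st}(d'_i),\operatorname{st}(\vec e)).
\end{equation*}
Set $d_i:=\operatorname{st}(d'_i)$. The induction hypothesis puts $\operatorname{st}(\vec e)$ in $\langle\operatorname{st}(B),d_1,\dots,d_{i-1}\rangle$, so $\operatorname{st}(a)\in\langle\operatorname{st}(B),d_1,\dots,d_i\rangle$. Elements of $B_i\setminus V_\mathcal A$ have $\operatorname{st}$-value $0\in\operatorname{st}(B)$ by convention, which completes the step.

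\emph{Condition} (iii): I would recast the $\operatorname{st}$-equation in the language of $T_c$ and use that $T_c$ is dependent. By \eqref{convexeq}, since $g(a_{i,1},\dots,a_{i,n},\vec b_2)\in\operatorname{st}(\mathbb M)$, the equation $\operatorname{st}(f(\vec a_i,\vec b_1))=g(a_{i,1},\dots,a_{i,n},\vec b_2)$ is equivalent to
\begin{equation*}
f(\vec a_i,\vec b_1)-g(a_{i,1},\dots,a_{i,n},\vec b_2)\in\mathfrak m(V)\ \ \text{or}\ \ \bigl(g(a_{i,1},\dots,a_{i,n},\vec b_2)=0\ \text{and}\ f(\vec a_i,\vec b_1)\notin V\bigr),
\end{equation*}
where $V$ is the convex hull of $\operatorname{st}(\mathbb M)$. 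As $V$ and $\mathfrak m(V)$ are $\mathcal L(\operatorname{st})$-$\emptyset$-definable, this disjunction is expressible in the $\mathcal L(V)$-reduct, where $(\mathbb M,V)\models T_c$ and $(\vec a_i)_{i\in\omega}$ remains indiscernible. Since $T_c$ is weakly o-minimal by Corollary~3.14 of \cite{tame} and hence dependent by Proposition~7.3 of \cite{weakly}, the set $\{i:\operatorname{st}(f(\vec a_i,\vec b_1))=g(a_{i,1},\dots,a_{i,n},\vec b_2)\}$ is eventually constant, hence finite or cofinite.

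The principal delicate point is condition (ii): the applicability of Proposition \ref{thmresineq} hinges on arranging $c_i\in W_i$ together with $B_{i-1}\cap W_i=W_{i-1}$, which is handled by the choice $W_i=V_\mathcal A\cap B_i$ and the harmless substitution $c_i\mapsto 1/c_i$ when $c_i\notin V_\mathcal A$. Conditions (i) and (iii) are then direct consequences of the cited results, once the $\operatorname{st}$-equation in (iii) is recast valuation-theoretically.
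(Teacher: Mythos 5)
Your proof is correct and follows the paper's overall strategy of verifying conditions (i), (ii), (iii) of Theorem \ref{thmdiscpair}, with (i) and (iii) handled essentially identically to the paper. The interesting divergence is in condition (ii). The paper chooses among all generating $n$-sets of $\mathcal B\langle a_1,\dots,a_n\rangle$ one with the maximal number of generators inside the $T$-convex hull $W_A$, proves by an exchange argument (equation \eqref{tameeq}) that the part of the structure lying in $W_A$ is generated by those ``finite'' generators alone, and only then iterates Proposition \ref{thmresineq} over them. You instead make every generator finite at the outset by noting that an ``infinite'' $c_i$ may be swapped for its infinitesimal reciprocal without changing $\mathcal B_{i-1}\langle c_i\rangle$, and then run the same one-element-at-a-time induction with $W_i=V_{\mathcal A}\cap B_i$. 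This is a genuinely different and somewhat slicker route: it removes the need for the maximality/exchange step entirely, at the (very small) cost of checking that a skipped step or a zero $c_i$ is harmless. One expository quibble: passing from $a-f(d'_i,\vec e)\in\mathfrak m(W_i)$ to $\operatorname{st}(a)=f(\operatorname{st}(d'_i),\operatorname{st}(\vec e))$ is attributed to ``continuous $\mathcal L$-definable functions commute with $\operatorname{st}$''; after extending $T$ by definitions to have quantifier elimination and a universal axiomatization, $\mathcal L$-terms need not define continuous functions. The clean justification is that Lemma 5.3 of \cite{tame} shows the residue structure $\overline{W_i}$ is generated as an $\mathcal L$-structure by $\overline{W_{i-1}}$ and $\overline{d'_i}$, and the canonical identification $\overline{W_i}\cong\operatorname{st}(W_i)$ turns this into exactly the identity you need. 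The paper's own write-up makes the same jump, so this does not constitute a gap in your argument relative to the paper's.
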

\begin{proof} We will show that $T_t$ satisfies the
assumptions of Theorem \ref{thmdiscpair}. As already mentioned above,
$T_t$ satisfies (i) by Theorem 5.9 from \cite{tame}.

\medskip\noindent
Now we  consider
(iii). Let $(\vec a_i)_{i\in \omega}$ be an indiscernible sequence from $\mathbb M^m$
such that there is a positive
$n\leq m$ with $a_{i,1},\dots,a_{i,n} \in \operatorname{st}(\mathbb{M})$ for every $i\in\omega$.
Also let $\vec b_1 \in \mathbb{M}^k$ and $\vec b_2 \in \operatorname{st}(\mathbb{M})^l$
and further $f,g$ be as in (iii) of Theorem \ref{thmdiscpair}. We now want to show that
\begin{equation*}
J:= \{ i \in \omega \ : \ \mathbb{M} \models \operatorname{st}(f(\vec
a_i,\vec b_1))=g(a_{i,1},\dots, a_{i,n},\vec b_2)\}
\end{equation*}
is finite or cofinite. Since $\operatorname{st}(\mathbb{M})$ is a model of $T$,
we have that for every $i\in\omega$
\begin{equation*}
g(a_{i,1},\dots ,a_{i,n},\vec b_2) \in \operatorname{st}(\mathbb{M}).
\end{equation*}
By \eqref{convexeq}, there is an $\Cal L(U)$-formula $\psi$ such that for every $i\in \omega$
$$T_t\models \operatorname{st}(f(\vec a_i,\vec b_1))=g(a_{i,1},\dots, a_{i,n},\vec b_2)\Longleftrightarrow T_c\models \psi(\vec{a_i},\vec{b_1},\vec{b_2}).$$
Since
$T_c$ is dependent, $J$ is finite or cofinite.

\medskip\noindent
It is left to show
(ii). So let $(\Cal A,\operatorname{st})\models T_t$, $\Cal B\preceq \Cal A$ and $a_1,\dots,a_n \in A$. Let $W_B$ be
the $T$-convex closure of $\operatorname{st}(B)$ in $B$ and $W_A$ the $T$-convex
closure of $\operatorname{st}(A)$ in $A$. Among the $n$-element sets $\{c_1,\dots,c_n\}$ with $\Cal B \<a_1,\dots,a_n\> = \Cal B \<c_1,\dots,c_n\>$ take the one with the maximal size of $\{c_1,\dots,c_n\} \cap W_A$. After renumbering, we have a natural number $l\leq n$ such that $c_i \in W_A$ iff $i \leq l$. We now show that
\begin{equation}\label{tameeq}
\Cal B \<c_1,\dots,c_n\> \cap W_A = \Cal B \< c_1,\dots,c_l \> \cap W_A.
\end{equation}
Towards a contradiction, let $c$ be in the left hand side but not in the right hand side of \eqref{tameeq}. Let $m$ be maximal with the property $c \notin \Cal B \< c_1,\dots,c_m \>$. Hence $l\leq m<n$ and
$$
c \in \Cal B\< c_1,\dots,c_{m+1} \> \setminus \Cal B \< c_1,\dots,c_m \>.
$$
By the Exchange principle, we have $\Cal B \< c_1,\dots,c_{m+1} \> =\Cal B \< c_1,\dots,c_{m},c \>$. Hence
$$
\Cal B \<a_1,\dots,a_n\> = \Cal B \< c_1,\dots,c_{n} \> = \Cal B \< c_1,\dots,c_{m},c,c_{m+2},\dots,c_n \>.
$$
But the cardinality of $\{c_1,\dots,c_{m},c,c_{m+2},\dots,c_n\}\cap W_A$ is at least $l+1$, contradicting the maximality of $l$.\\
\noindent Using Proposition \ref{thmresineq} inductively, we get $c_1',\dots,c_l'\in A$ such that for every $d \in \Cal B \<c_1,\dots,c_l\>$,
$$
d - f(c_1',\dots,c_l',\vec{b}) \in \mathfrak{m}(W_A).
$$
By \eqref{convexeq}, $\operatorname{st}(\Cal B \< c_1,\dots,c_l\>) \subseteq \<\operatorname{st}(B)\cup\{ \< c_1',\dots,c_l'\}\>$. Finally, by \eqref{tameeq}
$$
\operatorname{st}(\Cal B \< c_1,\dots,c_m\>) = \operatorname{st}(\Cal B \< c_1,\dots,c_l\>) \subseteq \<\operatorname{st}(B)\cup\{ \< c_1',\dots,c_l'\}\>.
$$
This establishes (ii) and finishes the proof.
\end{proof}


\section{Discrete groups}\label{discretegroups}

Let $\rtil$ be an o-minimal expansion of
$(\mathbb{R},<,+,\cdot,0,1)$ which is polynomially-bounded with
field of exponents $\mathbb{Q}$. Let $T$ be the theory of $\rtil$
and $\mathcal{L}$ be its language. We consider the structure
$(\rtil,2^{\mathbb{Z}})$. Since $2^{\mathbb{Z}}$ is discrete, we can
define a function $\lambda: \mathbb{R} \to
2^{\mathbb{Z}}\cup \{0\}$ by
\begin{equation*}
\lambda(x):=\left\{
              \begin{array}{ll}
                g, & \hbox{$x>0, g\in 2^{\mathbb{Z}}$ and $g \leq x < 2g$;} \\
                0, & \hbox{$x\leq 0$.}
              \end{array}
            \right.
\end{equation*}
Again, it is easy to see that the structures
$(\rtil,2^{\mathbb{Z}})$ and $(\rtil,\lambda)$ are interdefinable.
In \cite{miller2} generalizing the results from \cite{powersoftwo},
Miller showed that the latter has quantifier elimination up to
$\rtil$. For the following, we can assume that $\rtil$ has
quantifier elimination and has universal axiomatization.

\medskip\noindent
Let $T_{\operatorname{disc}}$ be the theory of
$(\rtil,\lambda)$ in the language $\mathcal{L}(\lambda)$, the
extension of $\mathcal{L}$ by a function symbol for the map
$\lambda$. As usual we let $\Cal L(U)=\Cal L\cup\{U\}$, where $U$ is a new unary predicate. For a model $(\Cal A,\lambda)$ of $T_{\operatorname{disc}}$, we sometimes want to refer to the $\Cal L(U)$-structure $(\Cal A,\lambda(A))$. In that case we put $G_{\Cal A}:=\lambda(A)\setminus \{0\}$.

\medskip\noindent
Let $\mathcal{A}$ be model of $T$, and define
\begin{equation*}
\fin{\Cal A}:= \{ x \in A : |x| \leq n \text{ for some }n>0\}.
\end{equation*}
It is easy to see that $\fin{\Cal A}$ is a local ring whose units are
\begin{equation*}
\Un{\Cal A}:= \{ x \in A : \frac{1}{n} \leq |x| \leq n \text{ for some }n>0\}.
\end{equation*}
\noindent Clearly $\fin(\Cal A)$ is convex and since the atomic model of $T$ is contained in $\mathbb R$, it is also $T$-convex by Proposition 4.2 in \cite{tame}. Let $v_{\Cal A} : A^{\times} \to \Gamma_{\Cal A}$ be the associated valuation: let $\Gamma_{\Cal A}:=A^\times/\Un{\Cal A}$ and $v_{\Cal A}(x):=x/\Un{\Cal A}$. Note that the multiplication of $\Cal A$ induces a group operation on $\Gamma_{\Cal A}$. Moreover, since $\Cal A$ is an expansion of a real closed field and hence closed under taking roots, the group $\Gamma_{\Cal A}$ is divisible and can be consider as a $\Q$-linear space with the scalar multiplication given by $q \cdot v(x) := v(x^q)$.

\medskip\noindent Let $\mathcal{B}$ be another model of $T$ with $\mathcal{B}\preceq \mathcal{A}$. It is easy to see that
$\fin{\Cal A} \cap \Cal B = \fin{\Cal B}$. Hence $\Gamma_{\mathcal{B}}$ can be embedded into $\Gamma_{\mathcal{A}}$. We will write $\dim_{\Q}(\Gamma_{\Cal A}/\Gamma_{\Cal B})$ for the $\Q$-linear dimension of the quotient space of $\Gamma_{\Cal A}$ and $\Gamma_{\Cal B}$.

\medskip\noindent
One of the key properties of polynomially-bounded o-minimal
structures is the {\it Valuation Inequality}.
We state below a particular
case; for the
general statement, see Corollary 5.6 in \cite{tameII}.

\begin{fact}[Valuation inequality-\cite{tameII}]\label{vieq}Let $\mathcal{A}$, $\mathcal{B}$ be models of $T$ with $\mathcal{B}\preceq \mathcal{A}$. Then
\begin{equation*}
\rk\big(\Cal A|\Cal B\big) \geq \dim_{\mathbb{Q}}(\Gamma_{\Cal A}/\Gamma_{\Cal B}),
\end{equation*}
where $\rk\big(\Cal A|\Cal B\big) := \inf \{ |X|  : X \subseteq A, \Cal B\langle X \rangle = \Cal A\}$.
\end{fact}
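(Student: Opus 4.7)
The plan is to induct on $n := \rk\big(\Cal A|\Cal B\big)$. There is nothing to prove when $n = \infty$, so assume $n < \infty$ and write $\Cal A = \Cal B\langle a_1,\dots,a_n\rangle$. Setting $\Cal A_i := \Cal B\langle a_1,\dots,a_i\rangle$ yields a chain $\Cal B = \Cal A_0 \preceq \Cal A_1 \preceq \cdots \preceq \Cal A_n = \Cal A$, and by additivity of $\Q$-linear dimension across the induced filtration of $\Gamma_{\Cal A}/\Gamma_{\Cal B}$,
\[
\dim_{\Q}(\Gamma_{\Cal A}/\Gamma_{\Cal B}) \;=\; \sum_{i=1}^{n} \dim_{\Q}(\Gamma_{\Cal A_i}/\Gamma_{\Cal A_{i-1}}).
\]
So it suffices to establish the rank-$1$ case: whenever $\Cal C \preceq \Cal D$ with $\Cal D = \Cal C\langle a\rangle$, one has $\dim_{\Q}(\Gamma_{\Cal D}/\Gamma_{\Cal C}) \le 1$.

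For the rank-$1$ step I would use the non-archimedean triangle inequality $v(x+y) = \min(v(x),v(y))$ (valid whenever $v(x)\neq v(y)$) to single out a distinguished element $t \in D$ whose valuation will span $\Gamma_{\Cal D}/\Gamma_{\Cal C}$. Inspecting the family $\{v(a-b) : b \in C\}$ — with the convention that $b$ may be $\pm\infty$ if the cut of $a$ is unbounded, in which case $v(a-b)$ is replaced by $\pm v(a)$ — the triangle inequality forces one of two alternatives: either some $v(a - b_0) \in \Gamma_{\Cal C}$, in which case I take $t := 1$; or all of the values $v(a-b)$ coincide in a single class $\gamma \in \Gamma_{\Cal D}\setminus\Gamma_{\Cal C}$, in which case I pick $b_0$ realising the relevant side of the cut and set $t := a - b_0$.

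Now I would show that $v(c) \in \Gamma_{\Cal C} + \Q\cdot v(t)$ for every $c \in D$. Writing $c = F(a)$ for some $\Cal L$-$C$-definable unary $F$, polynomial boundedness of $T$ with field of exponents $\Q$ produces an asymptotic expansion of the form
\[
F(a) \;=\; \alpha\,(a - b_0)^{q}\,(1 + \varepsilon),
\]
where $\alpha \in C^{\times}$, $q \in \Q$, and $\varepsilon \in \mathbb M$ has $v(\varepsilon) > 0$. Applying $v$ gives $v(c) = v(\alpha) + q\,v(t) \in \Gamma_{\Cal C} + \Q\cdot v(t)$, so $\Gamma_{\Cal D}/\Gamma_{\Cal C}$ is spanned by the class of $v(t)$ as a $\Q$-vector space and the desired inequality follows.

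The principal obstacle is the last step: producing the Puiseux-type expansion of $F$ uniformly across models of $T$. This is exactly where polynomial boundedness with exponent field $\Q$ is indispensable, together with the growth dichotomy for one-variable definable functions in such o-minimal theories. Transferring the real-analytic fact "every definable germ at a limit point is asymptotic to $\alpha x^{q}$" into an arbitrary elementary extension $\Cal C \preceq \Cal D$, and then extracting the leading coefficient $\alpha \in C^{\times}$ and the leading exponent $q \in \Q$ inductively through a compound $\Cal L$-term, requires the bulk of the technical work; the global strategy is otherwise just additivity of dimension plus one ultrametric dichotomy.
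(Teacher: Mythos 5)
The paper does not prove this statement; it is quoted as a known fact (Corollary 5.6 of \cite{tameII}), so there is no in-paper proof to compare against. Your reduction to the rank-one case via additivity of $\Q$-linear dimension along the chain $\Cal B = \Cal A_0 \preceq \cdots \preceq \Cal A_n = \Cal A$ is sound, and the plan to extract a leading monomial $\alpha x^q$ with $q \in \Q$ from polynomial boundedness (Miller's growth dichotomy) is indeed the engine behind van den Dries's proof of this inequality.

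However, the dichotomy you use to choose $t$ is misstated. You split on whether \emph{some} $v(a-b_0) \in \Gamma_{\Cal C}$, taking $t = 1$ in that case, but that hypothesis is compatible with $\Gamma_{\Cal D}$ properly containing $\Gamma_{\Cal C}$: if $b_*\in C$ realises the cut of $a$ and $a - b_*$ is much smaller than $b_* - b_0$, the ultrametric gives $v(a-b_0) = v(b_*-b_0) \in \Gamma_{\Cal C}$ while $v(a-b_*)$ may be a genuinely new class. The correct split is on whether \emph{all} $v(a-b)$, $b \in C$, lie in $\Gamma_{\Cal C}$; the ultrametric argument does show that the exceptional values, when they exist, agree in a single class, and that is the one whose monomial you should expand against. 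Beyond that (fixable) misstatement, the decisive step --- establishing the asymptotic $F(a) = \alpha(a - b_0)^q(1 + \varepsilon)$ with $\alpha \in C^\times$, $q \in \Q$, $v(\varepsilon) > 0$, uniformly in the cut that $a$ realises and for arbitrary $\Cal L$-$C$-definable $F$ --- is asserted rather than proved, and you rightly flag it as where the bulk of the work lies. As written, the argument is an outline with the correct overall shape but a wrongly calibrated case split and an unproved core.
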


\noindent In the following, we establish several easy corollaries of the Valuation inequality.

\begin{cor}\label{corvieq} Let $\mathcal{A}$, $\mathcal{B}$ be models of $T$ with $\mathcal{B}\preceq \mathcal{A}$ and let
$a_1,\dots,a_m \in A$. Then there are $c_1,\dots,c_m \in
\mathcal{B}\langle a_1,\dots,a_m\rangle$ such that for every $d \in \mathcal{B}\langle
a_1,\dots,a_m\rangle$, there are $q_1,\dots,q_m \in \mathbb{Q}$ and $b \in \mathcal{B}$ with
\begin{equation*}
\frac{d}{b\cdot c_1^{q_1}\dots c_m^{q_m}} \in \Un{\Cal B\langle
a_1,\dots,a_m\rangle}.
\end{equation*}
\end{cor}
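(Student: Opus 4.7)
The plan is to read the conclusion as a statement about representatives in the valuation group and apply the Valuation Inequality directly. Set $\Cal C := \Cal B\langle a_1,\dots,a_m\rangle$, so $\Cal B\preceq \Cal C\preceq \Cal A$, and the generation of $\Cal C$ over $\Cal B$ by $\{a_1,\dots,a_m\}$ gives $\rk(\Cal C|\Cal B)\le m$. Fact \ref{vieq} then yields $k:=\dim_{\Q}(\Gamma_{\Cal C}/\Gamma_{\Cal B})\le m$.

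Next, I would choose $c_1,\dots,c_k\in C^{>0}$ whose valuations $v_{\Cal C}(c_1),\dots,v_{\Cal C}(c_k)$ project to a $\Q$-basis of the quotient $\Gamma_{\Cal C}/\Gamma_{\Cal B}$, and pad the list by $c_{k+1}=\dots=c_m:=1$. Positive representatives are chosen so that the rational powers $c_i^{q_i}$ make sense in $\Cal C$ (recall $\Cal C$ expands a real closed field, and $v$ factors through $|\cdot|$, so this is harmless).

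For a nonzero $d\in C$, write $v_{\Cal C}(d)=\sum_{i=1}^k q_i\,v_{\Cal C}(c_i)+\gamma$ with $q_i\in\Q$ and $\gamma\in\Gamma_{\Cal B}$; this is possible because $\Gamma_{\Cal B}$ embeds into $\Gamma_{\Cal C}$ (which follows from $\fin{\Cal A}\cap B=\fin{\Cal B}$, as noted just before Fact \ref{vieq}). Lift $\gamma$ to some $b\in B^{\times}$ and set $q_{k+1}=\dots=q_m:=0$. Using that $\Gamma_{\Cal C}$ is a $\Q$-linear space with $q\cdot v(x)=v(x^q)$, we obtain
\[
v_{\Cal C}\!\left(\frac{d}{b\,c_1^{q_1}\cdots c_m^{q_m}}\right)=0,
\]
which is exactly the assertion $d/(b\,c_1^{q_1}\cdots c_m^{q_m})\in \Un{\Cal C}$. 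The case $d=0$ is vacuous (or may be ignored, as it is not covered by the definition of $\Un{\Cal C}$).

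There is no serious obstacle here; the Valuation Inequality does all the work. The only points to be careful about are (i) choosing positive representatives $c_i$ so that taking rational powers is legitimate, and (ii) checking that $\Gamma_{\Cal B}\hookrightarrow\Gamma_{\Cal C}$ so that the "lift $\gamma$ to $b\in B^{\times}$" step is valid. Both are routine consequences of $\Cal B\preceq \Cal C$ being models of the polynomially-bounded, real closed theory $T$.
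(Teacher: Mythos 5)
Your proposal is correct and is the natural argument; the paper itself states this as an ``easy corollary of the Valuation inequality'' and gives no written proof, so there is nothing to compare against beyond the stated intent. Your route is exactly that intent: note $\rk(\Cal C|\Cal B)\le m$ for $\Cal C:=\Cal B\langle a_1,\dots,a_m\rangle$, apply Fact \ref{vieq} to get $k:=\dim_{\Q}(\Gamma_{\Cal C}/\Gamma_{\Cal B})\le m$, pick positive representatives $c_1,\dots,c_k\in C$ whose valuations form a $\Q$-basis of the quotient, pad with $1$'s, and then for $d\ne 0$ split $v_{\Cal C}(d)$ into the $c_i$-part plus a $\Gamma_{\Cal B}$-part $\gamma$, lifting $\gamma$ through the embedding $\Gamma_{\Cal B}\hookrightarrow\Gamma_{\Cal C}$ (justified by $\fin{\Cal C}\cap B=\fin{\Cal B}$) to some $b\in B^{\times}$. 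Your two cautionary points (positivity of the $c_i$ so that $c_i^{q_i}$ is defined, and the legitimacy of lifting $\gamma$ to $B^{\times}$) are precisely the ones worth flagging, and both are handled correctly; the observation that $d=0$ must be excluded is also right, and it is harmless for the downstream application in Corollary \ref{vic} since $\lambda(0)=0$.
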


\begin{cor}\label{vic} Let $(\mathcal{B},\lambda) \preceq (\mathcal{A},\lambda) \models T_{\operatorname{disc}}$. Let $a_1,\dots,a_m \in A$. Then there are
$g_1,\dots,g_m \in G_{\Cal A}$ such that for every $d \in \mathcal{B}\langle
a_1,\dots,a_m\rangle$
\begin{equation*}
\lambda(d) = g \cdot g_1^{q_1}\dots g_m^{q_m}.
\end{equation*}
for some $q_1,\dots,q_m \in \mathbb{Q}$ and $g\in B\cap \lambda(A)$.
\end{cor}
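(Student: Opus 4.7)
The plan is to apply Corollary \ref{corvieq} and let the $g_i$ be the $\lambda$-values of the $c_i$ it produces. Concretely: apply Corollary \ref{corvieq} to $(a_1,\dots,a_m)$ over $\mathcal B$ to obtain $c_1,\dots,c_m\in\mathcal B\langle a_1,\dots,a_m\rangle$. Replacing each $c_i$ by $|c_i|$ (which preserves its valuation class and hence the conclusion of that corollary), we may assume $c_i>0$, and we set $g_i:=\lambda(c_i)\in G_{\mathcal A}$.

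Fix $d\in\mathcal B\langle a_1,\dots,a_m\rangle$. The case $d\le 0$ is trivial since $\lambda(d)=0\in B\cap\lambda(A)$; so assume $d>0$. By Corollary \ref{corvieq}, choose $b\in B^{>0}$, $q_i\in\mathbb Q$ and $u\in\Un{\mathcal B\langle\vec a\rangle}$ with $d=u\cdot b\cdot c_1^{q_1}\cdots c_m^{q_m}$. Since $c_i/g_i\in[1,2)\subseteq\Un{\mathcal A}$ by the definition of $\lambda$, substituting $c_i=g_i\cdot(c_i/g_i)$ and absorbing the correction factors into $u$ gives $d=u'\cdot b\cdot g_1^{q_1}\cdots g_m^{q_m}$ with $u':=u\prod_i(c_i/g_i)^{q_i}\in\Un{\mathcal A}$. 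Defining $g:=\lambda(d)\cdot(g_1^{q_1}\cdots g_m^{q_m})^{-1}$ yields the required identity $\lambda(d)=g\cdot g_1^{q_1}\cdots g_m^{q_m}$ by construction.

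The remaining task, which is the main obstacle, is to verify $g\in B\cap\lambda(A)$. Using $\lambda(d)=d\cdot(\lambda(d)/d)$ with $\lambda(d)/d\in(1/2,1]\subseteq\Un{\mathcal A}$, we compute $g=b\cdot u'\cdot(\lambda(d)/d)$, placing $g$ in $b\cdot\Un{\mathcal A}$. Refining $b=\lambda(b)\cdot(b/\lambda(b))$ with $\lambda(b)\in\lambda(B)\subseteq B\cap\lambda(A)$ and $b/\lambda(b)\in[1,2)$, we rewrite $g=\lambda(b)\cdot V$ for some $V\in\Un{\mathcal A}$. The hard part is to argue that the accumulated fractional-exponent contributions from $(c_i/g_i)^{q_i}$, from $b/\lambda(b)$, and from $\lambda(d)/d$ cancel exactly, leaving $V$ as an integer power of $2$, and hence $g=\lambda(b)\cdot V\in\lambda(B)\subseteq B\cap\lambda(A)$. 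This cancellation is the technical heart of the proof, and is where the defining inequality $\lambda(x)\le x<2\lambda(x)$ together with the discrete structure of $G_{\mathcal A}$ are used crucially.
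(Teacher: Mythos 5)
Your proposal reproduces the paper's opening moves --- apply Corollary \ref{corvieq}, pass to $g_i:=\lambda(c_i)$, reduce to $d>0$ --- but then stops precisely where the proof has to deliver. You define $g:=\lambda(d)\cdot\bigl(g_1^{q_1}\cdots g_m^{q_m}\bigr)^{-1}$, so that the identity $\lambda(d)=g\,g_1^{q_1}\cdots g_m^{q_m}$ holds by fiat, and you then explicitly flag the verification that $g\in B\cap\lambda(A)$ --- equivalently, that your unit $V$ is an integer power of $2$ --- as ``the hard part'' and ``the technical heart of the proof'' without carrying it out. Naming the obstacle is not the same as overcoming it, so the proposal has a genuine gap.

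The paper avoids getting into this corner by defining $g$ on the other side of the equation. After replacing $c_i$ by $g_i=\lambda(c_i)$ and $b$ by $g_0:=\lambda(b)$ (each a correction by a factor in $[1,2)$, hence keeping the ratio finite), it notes that $d/(g_0\,g_1^{q_1}\cdots g_m^{q_m})$ lies in $[1/n',n']$ for some $n'\in\N$, so there is an $l\in\Z$ with $1\le d/(2^l g_0\,g_1^{q_1}\cdots g_m^{q_m})<2$; it then sets $g:=2^l g_0$, which lies in $\lambda(B)=B\cap\lambda(A)$ by construction, and the interval condition together with the defining property of $\lambda$ gives $\lambda(d)=g\,g_1^{q_1}\cdots g_m^{q_m}$. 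Your $V$ is exactly this $2^l$, but your framing offers no mechanism for producing it: you track the individual correction factors $(c_i/g_i)^{q_i}$, $b/\lambda(b)$ and $\lambda(d)/d$ and hope they ``cancel,'' whereas the paper never looks at them separately --- it simply chooses $l$ to renormalize the ratio into $[1,2)$, which is all the definition of $\lambda$ asks for. To complete your argument you would have to, in effect, rediscover this step: extract the integer $l$ from the boundedness of the ratio and let the definition of $\lambda$ do the rest.
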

\begin{proof} By Corollary \ref{corvieq}, there are $c_1,\dots,c_m \in \Cal B\langle a_1,\dots,a_m\rangle$ such that for every $d \in\Cal B\langle a_1,\dots,a_m\rangle$ there are $q_1,\dots,q_m \in \mathbb{Q}$, $b \in B$ and $n \in \N$ with
$$
\frac{1}{n} \leq \frac{d}{b\cdot c_1^{q_1}\dots c_m^{q_m}} \leq n.
$$
Set $g_0:= \lambda(b)$ and $g_i := \lambda(c_i)$ for $i=1,\dots,m$. Hence there is $n' \in \N$ such that
$$
\frac{1}{n'} \leq \frac{d}{g_0\cdot g_1^{q_1}\dots g_m^{q_m}} \leq n'.
$$
Then there is $l\in \Z$ such that
$$
1 \leq \frac{d}{2^l \cdot g_0\cdot g_1^{q_1}\dots g_m^{q_m}} < 2.
$$
Finally set $g:= 2^l\cdot g_0$. Then clearly $\lambda(d) = g \cdot g_1^{q_1}\dots g_m^{q_m}$.
\end{proof}

\medskip \noindent
We fix the following notation: Let $G$ be a torsion-free abelian group (written multiplicatively) and let $m>0$. Then
$$G^{[m]}:=\{g^m:g\in G\},$$
and for a pure subgroup  $H$ of $G$ and
$g_1,\dots,g_n\in G$, we define  $H_G\langle g_1,\dots,g_n\rangle$ to be
the smallest pure subgroup
\begin{equation*}
\big\{(h\cdot{g_1}^{k_1}\cdots{g_n}^{k_n})^{\frac{1}{m}} \ | \ h \in H,k_1,\dots,k_n \in
\mathbb{Z},m>0, h\cdot{g_1}^{k_1}\cdots{g_n}^{k_n}\in
G^{[m]}\big\}
\end{equation*}
of $G$ containing $H$ and $g_1,\dots,g_n$.

\begin{cor}\label{vic2} Let $(\mathcal{B},\lambda) \preceq (\mathcal{A},\lambda) \models T_{\operatorname{disc}}$, and let $g_1,\dots,g_m \in
G$.  Put $G:=G_{\Cal A}$ and $H:=G_{\Cal B}$. Then
$$\big(\Cal B\langle g_1,\dots,g_m \rangle,
H_{G}\langle g_1,\dots,g_m \rangle\big)\preceq (\Cal A,G).$$
\end{cor}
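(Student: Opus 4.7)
My plan is to invoke Miller's quantifier elimination for $T_{\operatorname{disc}}$ in the language $\mathcal{L}(\lambda)$ mentioned at the beginning of this section: once QE is available, it suffices to exhibit an $\mathcal{L}(\lambda)$-substructure, and such substructures are automatically elementary. Writing $\Cal C := \Cal B\langle g_1,\dots,g_m\rangle$ and $K := H_G\langle g_1,\dots,g_m\rangle$, and keeping in mind the interdefinability of $(\Cal A,G)$ and $(\Cal A,\lambda)$, my task reduces to showing that $\Cal C$ is closed under $\lambda$ with $\lambda(\Cal C)\setminus\{0\} = K$. That $\Cal C \preceq \Cal A$ as $\mathcal{L}$-structures comes for free from the universal axiomatization and quantifier elimination of $T$, which reduces everything to statements about $\lambda$.

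The inclusion $K \subseteq \lambda(\Cal C)\setminus\{0\}$ is straightforward: every element of $K$ has the form $(h\cdot g_1^{k_1}\cdots g_m^{k_m})^{1/n}$ with radicand in $G^{[n]}$, so the $n$-th root exists in the real closed field $\Cal C$, and being an element of $G$ it is fixed by $\lambda$. For the reverse inclusion I apply Corollary \ref{vic} to the tuple $a_i = g_i$, taking the auxiliary elements $c_i$ produced by Corollary \ref{corvieq} to be the $g_i$'s themselves. Granting this choice, Corollary \ref{vic} yields that every $d \in \Cal C$ satisfies
$$\lambda(d) = h \cdot g_1^{q_1}\cdots g_m^{q_m}$$
for some $h \in H$ and $q_1,\dots,q_m \in \Q$. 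Since $\lambda(d)$ and $h$ both lie in $G$, so does $g_1^{q_1}\cdots g_m^{q_m}$; clearing denominators $q_i = k_i/n$ yields $g_1^{k_1}\cdots g_m^{k_m} \in G^{[n]}$, which is precisely the condition placing $\lambda(d)$ in $K$.

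The delicate point, and the one I expect to be the main obstacle, is justifying that $c_i = g_i$ is a legitimate choice in Corollary \ref{corvieq}. This amounts to showing that $v(g_1),\dots,v(g_m)$ already $\Q$-span $\Gamma_{\Cal C}/\Gamma_{\Cal B}$, so that every $d \in \Cal C$ admits a decomposition $v(d) = v(b) + \sum q_i v(g_i)$ with $b \in B$ and $q_i \in \Q$. This control of the value group extension rests on the Valuation Inequality (Fact \ref{vieq}) together with the polynomial-boundedness of $\rtil$ with field of exponents $\Q$: the Valuation Inequality caps $\dim_{\Q}(\Gamma_{\Cal C}/\Gamma_{\Cal B})$ by $m$, and any $\mathcal{L}$-term built from $\Cal B$ and the $g_i$'s behaves to leading order like a generalized monomial in the $g_i$'s with rational exponents, forcing its valuation into the $\Q$-span of $\Gamma_{\Cal B}$ and the $v(g_i)$'s. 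Once this spanning property is in hand, the remainder of the proof is a formal unwinding of the definitions of $H_G\langle\cdot\rangle$ and $\lambda$.
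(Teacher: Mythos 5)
Your reduction via quantifier elimination, and the easy inclusion $K \subseteq \lambda(\Cal C)\setminus\{0\}$, are fine. The gap is exactly where you flag it, and your proposed fix for the ``delicate point'' does not go through.

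You want to show that $v(g_1),\dots,v(g_m)$ already $\Q$-span $\Gamma_{\Cal C}/\Gamma_{\Cal B}$, and you propose to derive this from the Valuation Inequality plus a vague ``terms behave to leading order like generalized monomials'' heuristic. That heuristic is not sound as stated: a term such as $g_1 - g_2$ need not be asymptotic to any monomial in $g_1,g_2$, and its valuation can a priori lie far outside $\Gamma_{\Cal B} + \Q v(g_1) + \cdots + \Q v(g_m)$. More fundamentally, the Valuation Inequality only gives $\dim_\Q(\Gamma_{\Cal C}/\Gamma_{\Cal B}) \leq m$; it does not by itself put the $v(g_i)$'s in spanning position. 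In fact the spanning statement is \emph{false} for arbitrary $m$-tuples from $A$: if some $g_i$ is infinitesimally close to an element of $B$ without lying in $B$, then $v(g_i)\in\Gamma_{\Cal B}$ yet $\Cal B\langle g_i\rangle$ has strictly larger value group, and $v(g_i)$ spans nothing. So some input specific to $g_i\in G$ is required and your argument never supplies it.

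The missing ingredient, which is the heart of the paper's proof, is the discreteness of $G$ in the form of the following Claim: if $g\in G\setminus H$ then $v_{\Cal A}(g)\notin\Gamma_{\Cal B}$. The proof uses that $2$ is the least element of $G$ greater than $1$: if $v(g)\in\Gamma_{\Cal B}$, one can find $h\in H$ with $1\leq g/h < 2$, forcing $g = h\in H$. This rules out exactly the failure mode above, and combined with the Valuation Inequality it gives the spanning. The paper organizes this as an induction reducing to $m=1$ (where the Claim together with the dimension bound $\leq 1$ immediately yields $\Gamma_{\Cal B\langle g\rangle} = \Gamma_{\Cal B} + \Q v(g)$), rather than trying to force the $c_i$'s in Corollary \ref{corvieq} to be the $g_i$'s directly. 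You should either prove the Claim and set up the induction carefully (including verifying that the Claim propagates to the intermediate structures $\Cal B\langle g_1,\dots,g_j\rangle$, which requires knowing they are $\Cal L(\lambda)$-elementary substructures --- i.e., the inductive hypothesis), or abandon the attempt to deduce spanning from the Valuation Inequality and polynomial boundedness alone.
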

 \begin{proof}  Since $T_{\operatorname{disc}}$ has quantifier elimination in the language $\Cal L(\lambda)$, we only the need to show that
$$
\lambda\big(\Cal B \langle g_1,\dots,g_m \rangle \big)= H_{G}\langle g_1,\dots,g_m \rangle \cup \{0\}.
$$
It is easy to see that $H_{G}\langle g_1,\dots,g_m \rangle \cup \{0\}\subseteq \lambda\big(\Cal B \langle g_1,\dots,g_m \rangle \big)$.
For the other inclusion, by induction, we may assume that $m=1$. So let $g\in G$.

\medskip\noindent
{\bf Claim.} If $g\notin H$, then $v_{\Cal A}(g)\notin\Gamma_{\Cal B}$.

\smallskip\noindent
{\it Proof of the claim.} For a contradiction, suppose there is $b \in B$ and $n >0$ such that
$$
\frac{1}{n} \leq \frac{g}{b} \leq n.
$$
By replacing $b$ by $\lambda(b)$ and multiplying $b$ by an element of $2^{\Z}$, we have an $h \in H$ such that
$$
1 \leq \frac{g}{h} < 2.
$$
Since $2$ is the smallest element of $G$ larger than $1$ and $\frac{g}{h} \in G$, we have $g=h$. Hence $g \in H$. This is a contradiction against $g\in G \setminus H$, finishing the proof of the claim.

\medskip\noindent
We need to show that $\lambda(c)$ is in $H_G\langle g \rangle$ for every $c\in\Cal B\<g\>$. By Fact \ref{vieq}, the $\Q$-dimension of $\Gamma_{\Cal B\langle g \rangle}$ over $\Gamma_{\Cal B}$ is either $0$ or $1$. If it is $0$, then we are done, as $g\in H\subseteq B$. If that dimension is $1$, then using the claim, $\Gamma_{\Cal B\<g\>}$
is the $\Q$-linear subspace of $\Gamma_{\Cal A}$ generated by $\Gamma_{\Cal B}$ and $v_{\Cal A}(g)$.

\noindent Let $c \in \Cal B\langle g \rangle$. There are $q \in \Q$, $n\in \N$ and $b \in B$ such that
$$
\frac{1}{n} \leq \frac{c}{b\cdot g^q} \leq n.
$$
As above, we choose $b$ such that $b \in H$ and
$$
1 \leq \frac{c}{b\cdot g^q} < 2.
$$
Hence $\lambda(c) = b\cdot g^q$ and so $\lambda(c) \in H_G\langle g \rangle$.
 \end{proof}

\begin{thm}\label{discdependent} $T_{\operatorname{disc}}$ is dependent.
\end{thm}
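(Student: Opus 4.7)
The plan is to apply Theorem~\ref{thmdiscpair} to $T_{\operatorname{disc}}$ with $\mathfrak f := \lambda$, so one must verify its three hypotheses. Hypothesis~(i) is Miller's quantifier elimination from \cite{miller2}, cited in the preamble of this section. Hypothesis~(ii) is a direct consequence of Corollary~\ref{vic}: given $(\Cal A,\lambda)\models T_{\operatorname{disc}}$, $\Cal B\preceq\Cal A$ with $\lambda(B)\subseteq B$, and $c_1,\dots,c_n\in A$, the corollary yields $g_1,\dots,g_n\in G_{\Cal A}$ so that every $\lambda(d)$ for $d\in\Cal B\langle c_1,\dots,c_n\rangle$ equals $g\cdot g_1^{q_1}\cdots g_n^{q_n}$ with $g\in\lambda(B)$ and $q_i\in\Q$; since $\Cal A$ is a real closed field, all such products lie in $\langle\lambda(B),g_1,\dots,g_n\rangle$.

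Condition~(iii) is the main content. Writing $u_i := f(\vec a_i,\vec b_1)$ and $v_i := g(a_{i,1},\dots,a_{i,n},\vec b_2)$, the definition of $\lambda$ gives
$$
\lambda(u_i)=v_i \iff (u_i\leq 0 \wedge v_i=0) \vee (v_i>0 \wedge v_i\leq u_i<2v_i \wedge v_i\in G_{\mathbb M}).
$$
Every clause other than $v_i\in G_{\mathbb M}$ is an $\Cal L$-formula in $\vec a_i,\vec b_1,\vec b_2$; by dependence of the o-minimal $T$ and the $\Cal L$-indiscernibility of $(\vec a_i)$ (which follows from its $\Cal L(\lambda)$-indiscernibility), the set of $i$ on which this $\Cal L$-part holds is finite or cofinite. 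It therefore suffices to prove that $J:=\{i : v_i\in G_{\mathbb M}\cup\{0\}\}$ is finite or cofinite.

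Assume towards a contradiction that $J$ is neither, and apply Proposition~\ref{shelahprop} to the $\Cal L(\lambda)$-formula $\lambda(g(x_1,\dots,x_n,\vec b_2))=g(x_1,\dots,x_n,\vec b_2)$ expressing $v_i\in G_{\mathbb M}\cup\{0\}$. This produces a sequence $(\vec c_i)_{i\in\omega}$ with the same $\Cal L(\lambda)$-type as $(\vec a_i)$ (so $c_{i,1},\dots,c_{i,n}\in\lambda(\mathbb M)$ for every $i$ and $(\vec c_i)$ is $\Cal L(\lambda)$-indiscernible over $\emptyset$), whose even-indexed sub-sequence $(\vec c_{2i})$ is indiscernible over $\vec b_2$, and with $v'_i := g(c_{i,1},\dots,c_{i,n},\vec b_2)\in G_{\mathbb M}\cup\{0\}$ iff $i$ is even. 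Let $\Cal C$ denote the $\lambda$-closure of $\vec b_2$ in $\mathbb M$. By Corollary~\ref{vic2}, $G_{\Cal C\langle\vec c_0\rangle}$ equals the pure hull of $G_{\Cal C}\cup\{c_{0,1},\dots,c_{0,n}\}$ inside $G_{\mathbb M}$. Hence $v'_0\in G_{\mathbb M}$ translates into a rational linear dependence of $v_{\mathbb M}(v'_0)$ on $v_{\mathbb M}(c_{0,1}),\dots,v_{\mathbb M}(c_{0,n})$ modulo $\Gamma_{\Cal C}$; by the even indiscernibility over $\vec b_2$ the analogous dependence with the same rational exponents holds at every even index.

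The main obstacle is to transfer this rational dependence to the odd indices, thereby forcing $v'_1\in G_{\mathbb M}$ and contradicting the alternation. The plan is to invoke the Valuation Inequality (Fact~\ref{vieq}) applied to the rank of the $\Cal L$-substructure generated over $\Cal C$ by consecutive pairs $\vec c_{2i},\vec c_{2i+1}$: since the full sequence $(\vec c_i)$ has the same $\Cal L$-type as $(\vec a_i)$, this rank is uniformly bounded, so the induced $\Q$-dimension of $\Gamma_{\Cal C\langle\vec c_{2i},\vec c_{2i+1}\rangle}/\Gamma_{\Cal C}$ is uniformly bounded as well. Combined with the pure-hull description from Corollary~\ref{vic2} and the fixed rational dependence at even indices, a dimension count in the divisible group $\Gamma_{\mathbb M}/\Gamma_{\Cal C}$ forces the same dependence at the odd indices, contradicting $v'_1\notin G_{\mathbb M}$. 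Executing this transfer step precisely — balancing the role of $\Cal L(\lambda)$-indiscernibility over $\emptyset$ against $\Cal L(\lambda)$-indiscernibility of the even subsequence over $\vec b_2$ — is the heart of the proof.
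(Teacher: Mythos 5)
Your verification of hypotheses (i) and (ii) of Theorem~\ref{thmdiscpair} matches the paper, and the initial reduction in (iii) --- isolating $\{\,i:v_i\in G_{\mathbb M}\cup\{0\}\,\}$ as the only non-$\Cal L$-definable obstruction --- is essentially the paper's reduction (the paper calls this set $I$ and shows $J$ is cofinite in $I$, then argues $I$ is cofinite). Where you diverge, there is a genuine gap.

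First, your assertion that ``$v'_0\in G_{\mathbb M}$ \emph{translates into} a rational linear dependence of $v_{\mathbb M}(v'_0)$ on $v_{\mathbb M}(c_{0,1}),\dots,v_{\mathbb M}(c_{0,n})$ modulo $\Gamma_{\Cal C}$'' is only an implication, not an equivalence. The valuation of $\lambda(v'_1)$ always equals that of $v'_1$ (they differ by a unit of $\fin{\mathbb M}$), so knowing that $v_{\mathbb M}(v'_1)$ satisfies the right rational dependence tells you nothing about whether $v'_1=\lambda(v'_1)$, i.e.\ whether $v'_1\in G_{\mathbb M}$. Hence even a successful transfer of the valuation dependence to odd indices would not produce the contradiction you want. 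Second, the transfer itself (``a dimension count in $\Gamma_{\mathbb M}/\Gamma_{\Cal C}$'') is never carried out, and it is unclear it can be: Fact~\ref{vieq} bounds $\dim_{\Q}(\Gamma_{\Cal C\langle\vec c_{2i},\vec c_{2i+1}\rangle}/\Gamma_{\Cal C})$, but a bound does not pin down the coefficients at odd indices, and the valuation $v_{\mathbb M}$ is not an $\Cal L$-notion, so $\Cal L$-indiscernibility of $(\vec c_i)$ over $\emptyset$ does not transport them either.

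The paper's proof sidesteps both issues. From Corollary~\ref{vic2} together with indiscernibility of the even subsequence over $\vec b_1,\vec b_2$, it extracts an exact $\Cal L$-identity
$g(a_{i,1},\dots,a_{i,n},\vec b_2)=a_{i,1}^{q_1}\cdots a_{i,n}^{q_n}\cdot\vec b_2^{\,\vec k}$
with fixed $q_j\in\Q$ and $\vec k\in\Z^l$, valid on $I$. Because this is a single $\Cal L$-formula in $\vec a_i$ and $T$ is dependent while $(\vec a_i)$ is $\Cal L$-indiscernible, the identity holds for cofinitely many $i\in\omega$. Membership of the right-hand side in $G_{\mathbb M}$ is then rewritten, after clearing denominators by a common $M$, as $a_{i,1}^{q_1M}\cdots a_{i,n}^{q_nM}\in 2^s\cdot G_{\mathbb M}^{[M]}$ for a fixed $s$; this condition involves only $\vec a_i$, and by $\Cal L(\lambda)$-indiscernibility of $(\vec a_i)$ over $\emptyset$ it holds for all $i$ or for none. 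Since it holds on the infinite set $I$, it holds for all $i$, whence $I$ is cofinite. You should replace your valuation-theoretic transfer with this ``exact identity $+$ dependence of $T$ $+$ coset mod $M$-th powers'' argument.
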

\begin{proof} We just need to check that $T_{\operatorname{disc}}$ satisfies
the assumption of Theorem \ref{thmdiscpair}. Quantifier elimination
is shown in the proof of Theorem 3.4.2 in \cite{miller2}.
Assumption (ii) follows directly from Corollary \ref{vic}.

\medskip\noindent
So it is
only left to show (iii). Therefore let $\mathbb{M}$ be a
monster model of $T_{\operatorname{disc}}$ and take an indiscernible sequence
$(\vec a_{i})_{i\in \omega}$ from $\mathbb M^m$ such that there is $n\leq m$ with $a_{i,1},\dots,a_{i,n} \in \lambda(\mathbb{M})$ for every $i\in\omega$. Further
let $\vec b_1 \in \mathbb{M}^k$ and $\vec b_2 \in
\lambda(\mathbb{M})^l$. For a contradiction, suppose that there are $\mathcal L$-terms $f,g$ such that
\begin{equation*}
J:= \{ i \in \omega \ : \ \mathbb{M} \models
\lambda(f(\vec a_{i},\vec b_1))=g(a_{i,1},\dots,
a_{i,n},\vec b_2)\}
\end{equation*}
 is neither finite nor cofinite. Hence $J$ is an infinite subset of
\begin{equation*}
I:=\{ i \in \omega \ : \ \mathbb{M} \models g(a_{i,1},\dots a_{i,n},\vec b_2) \in \lambda(\mathbb M)\}.
\end{equation*}
By definition of $\lambda$, for every $i \in I$
$$
i \in J \textrm{ iff } \mathbb{M} \models 1 \leq
\frac{f(\vec a_{i},\vec b_1)}{g(a_{i,1},\dots,a_{i,n},\vec b_2)}<2.$$
Since $T$ is dependent, the right hand side must hold for cofinitely
many elements of $I$. Hence $J$ is cofinite in $I$. Therefore if $I$ is cofinite in $\omega$, then $J$ must be cofinite in $\omega$ as well.  Thus in order to show (iii) of Theorem
\ref{thmdiscpair} holds for $T_{\operatorname{disc}}$, it is only left to show that $I$ is cofinite in $\omega$.

\medskip\noindent
By Proposition \ref{shelahprop}, we may assume that $(\vec a_i)_{i\in I}$ is indiscernible over $\vec b_1,\vec b_2$.  By Corollary \ref{vic2}, for
every $i \in I$ there are $q_1,\dots,q_n\in \mathbb{Q}$ such that
\begin{equation*}
g(a_{i,1},\dots, a_{i,n},\vec b_2) =
a_{i,1}^{q_1}\dots a_{i,n}^{q_n}\cdot\vec b_2^{\,\vec{k}},
\end{equation*}
where $\vec k\in\Z^l$ and $\vec b_2^{\,\vec k}$ is short for $b_{2,1}^{k_1}\cdots b_{2,l}^{k_l}$.
Using once again the fact that $T$ is dependent, this equation holds not only for $i\in I$, but also for
cofinitely many $i\in \omega$. Hence we can assume that
$$
I = \{ i \in \omega : a_{i,1}^{q_1}\dots a_{i,n}^{q_n}\cdot\vec b_2^{\,\vec{k}} \in \lambda(\mathbb M)\}.
$$
\noindent It is left to show that for
cofinitely many $i\in \omega$
$$a_{i,1}^{q_1}\dots a_{i,n}^{q_n}\cdot\vec b_2^{\,\vec{k}} \in G_{\mathbb M}.$$
Let $M\in \N$ be such that $q_1\cdot M,\dots,q_n\cdot M
\in \mathbb{Z}$ and $M \cdot \vec{k} \in \mathbb{Z}^{l}$. So
we need to show that for cofinitely many $i\in \omega$, we have
\begin{equation}\label{thmdiscgroupeq}
a_{i,1}^{q_1\cdot M}\dots a_{i,n}^{q_n\cdot M} \in \vec b_2^{\,\vec{k}\cdot M} \cdot G_{\mathbb M}^{[M]}.
\end{equation}
Clearly $G_{\mathbb M}^{[M]}$ has only finitely many cosets in $G_{\mathbb M}$, since
$|2^{\mathbb{Z}}: (2^{\mathbb{Z}})^{[M]}|=M$. Further
$1,2,\dots,2^{M-1}$ are representatives of this cosets. Let $s \in
\{0,\dots,M-1\}$ be such that $\vec b_2^{\,\vec{k}\cdot M}$ is in $2^s \cdot
G_{\mathbb M}^{[M]}$. Then for every $i \in \omega$, we have that
\eqref{thmdiscgroupeq} holds iff
\begin{equation}\label{thmdiscgroupeq2}
a_{i,1}^{q_1}\dots a_{i,n}^{q_n} \in 2^s \cdot G_{\mathbb M}^{[M]}.
\end{equation}
Since \eqref{thmdiscgroupeq} holds for $i \in I$ and $(a_i)_{i\in
\omega}$ is an indiscernible sequence, the condition
\eqref{thmdiscgroupeq2} holds for all $i \in \omega$. Hence for
cofinitely many $i\in \omega$, we have that
$g(a_{i,1},\dots a_{i,n},\vec b_2) \in G_{\mathbb M}$. Hence $I$ is cofinite in $\omega$. \end{proof}

\begin{remark} For Theorem \ref{discdependent}, the assumption that the field of exponents of $\tilde{\mathbb R}$ is $\mathbb Q$ is necessary. If $\tilde{\mathbb R}$ defines an irrational power function, the structure $(\tilde{\mathbb R},2^{\mathbb{Z}})$ defines $\mathbb Z$ by  Corollary 1.5 of \cite{integers} and hence is not dependent.
\end{remark}


 \bibliography{ref}

\end{document}